\theoremstyle{plain}
\newtheorem{theorem}{\indent\bf Theorem}[section]
\newtheorem{corollary}[theorem]{\indent\bf Corollary}
\theoremstyle{definition}
\newtheorem{definition}[theorem]{\indent\bf Definition}
\begin{document}
\title[Irreducibility criteria]{Apollonius circles and irreducibility criteria\\for 
polynomials}    
\author[A.I. Bonciocat]{Anca Iuliana Bonciocat}
\address{Simion Stoilow Institute of Mathematics of the Romanian 
Academy, Research Unit nr. 3,
P.O. Box 1-764, Bucharest 014700, Romania}
\email{Anca.Bonciocat@imar.ro} 
\author[N.C. Bonciocat]{Nicolae Ciprian Bonciocat}
\address{Simion Stoilow Institute of Mathematics of the Romanian 
Academy, Research Unit nr. 7,
P.O. Box 1-764, Bucharest 014700, Romania}
\email{Nicolae.Bonciocat@imar.ro}
\author[Y. Bugeaud]{Yann Bugeaud}
\address{Universit\'{e} de Strasbourg, Math\'{e}matiques, 7, 
rue Ren\'{e} Descartes, 67084 Strasbourg Cedex, France}
\email{yann.bugeaud@math.unistra.fr}
\author[M. Cipu]{Mihai Cipu}
\address{Simion Stoilow Institute of Mathematics of the Romanian 
Academy, Research Unit nr. 7,
P.O. Box 1-764, Bucharest 014700, Romania}
\email{Mihai.Cipu@imar.ro}

\dedicatory{Dedicated to our friend, Professor Maurice Mignotte}

\keywords{irreducible polynomials, prime numbers}
\subjclass[2000]{Primary 11R09; Secondary 11C08.}

\begin{abstract}
We prove the irreducibility of integer polynomials $f(X)$ whose roots lie 
inside an Apollonius circle associated to two points on the real axis with 
integer abscisae $a$ and $b$, with ratio of the distances to these points 
depending on the canonical decomposition of $f(a)$ and $f(b)$. In particular, 
we obtain irreducibility criteria for the case where $f(a)$ and $f(b)$ have 
few prime factors, and $f$ is either an Enestr\"om-Kakeya polynomial, or has 
a large leading coefficient. Analogous results are also provided for 
multivariate polynomials over arbitrary fields, in a non-Archimedean setting.
\end{abstract}
\maketitle

\section{Introduction} \label{se1}

One of the methods to study the irreducibility of polynomials is to use 
information on the values that they take at some specified integer 
arguments. A famous result of P\'{o}lya \cite{Polya} considers only the 
magnitude of  the absolute values that a polynomial takes, with disregard 
to their canonical decomposition:
\medskip

{\bf Theorem 1.}\ {\em If for $n$ integral values of $x$, the integral 
polynomial $f(x)$ of degree $n$ has values 
which are different from zero, and in absolute value less than
\[
\frac{\lceil n/2\rceil! }{2^{\lceil n/2\rceil }},
\]
then $f(x)$ is irreducible over $\mathbb{Q}$.
}
\medskip
 
Since 1919 this result was generalized in many different ways, of which 
we only mention here two recent ones, corresponding to the setting where 
the coefficients belong to the ring of integers of an arbitrary imaginary 
quadratic number field \cite{GHT}, and to the multivariate case over an 
arbitrary field \cite{BBCM1}.

Other irreducibility criteria in the literature rely heavily on the 
canonical decomposition of the value that a given polynomial takes at a 
single, specified integral argument. The most interesting results of this 
kind take benefit of the existence in this canonical decomposition of a 
suitable prime divisor, or prime power divisor. For instance, in 
\cite{PolyaSzego} P\'{o}lya and Szeg\"{o} give the following nice 
irreducibility criterion of A. Cohn:\medskip

{\bf Theorem 2.} \ \emph{If a prime $p$ is expressed in the decimal
system as 
\[
p=\sum\limits_{i=0}^{n}a_{i}10^{i},\quad0\leq a_{i}\leq9,
\]
then the polynomial $\sum_{i=0}^{n}a_{i}X^{i}$ is irreducible in $\mathbb{Z}
[X]$.}\medskip

Brillhart, Filaseta and Odlyzko \cite{Brillhart} extended this result to an
arbitrary base $b$: \medskip

{\bf Theorem 3.} \ \emph{If a prime $p$ is expressed in the number system
with base $b\geq2$ as 
\[
p=\sum\limits_{i=0}^{n}a_{i}b^{i},\quad0\leq a_{i}\leq b-1,
\]
then the polynomial $\sum_{i=0}^{n}a_{i}X^{i}$ is irreducible in $\mathbb{Z}
[X]$.}\medskip

Filaseta \cite{Filaseta1} obtained another generalization
of Cohn's theorem by replacing the prime $p$ by a composite number $pq$ with $q<b$
:\medskip

{\bf Theorem 4.} \ \emph{Let $p$ be a prime number, $q$ and $b$ positive
integers, $b\geq2$, $q<b$, and suppose that $pq$ is expressed in the number
system with base $b$ as 
\[
pq=\sum\limits_{i=0}^{n}a_{i}b^{i},\quad0\leq a_{i}\leq b-1.
\]
Then the polynomial $\sum_{i=0}^{n}a_{i}X^{i}$ is irreducible over the
rationals.}\medskip

Cohn's irreducibility criterion was also generalized in \cite{Brillhart} and 
\cite{Filaseta2} by permitting the coefficients of $f$ to be different from
digits. For instance, the following irreducibility criterion for polynomials
with non-negative coefficients was proved in \cite{Filaseta2}.\medskip

{\bf Theorem 5.} \ \emph{Let $f(X)=\sum_{i=0}^{n}a_{i}X^{i}$ be such that 
$f(10)$ is a prime. If the $a_{i}$'s satisfy $0\leq a_{i}\leq a_{n}10^{30}$
for each $i=0,1,\dots,n-1$, then $f(X)$ is irreducible.} \medskip

Cole, Dunn, and Filaseta produced in~\cite{CDF} sharp bounds $M(b)$ depending
on an integer $b\in [3,20]$ such that if each coefficient of a polynomial $f$ 
with non-negative integer coefficients is at most $M(b)$ and $f(b)$ is prime, 
then $f$ is irreducible.

Some classical related results relying on
the canonical decomposition of the value that a polynomial takes at some integral argument may be also found in the works of St\"ackel \cite{Stackel}, Ore \cite{Ore}, Weisner \cite{Weisner} and Dorwart \cite{Dorwart}. 
For an unifying approach that uses the concept of admissible triples to study irreducibility of polynomials, we refer the reader to \cite{Guersenzvaig}. Along with a simultaneous generalization of some classical irreducibility criteria, one may also find in \cite{Guersenzvaig} upper bounds for the total number of irreducible factors (counting multiplicities) for some classes of integer polynomials (see also \cite{GS} for problems related to the study of roots multiplicities and square free factorization). 
For further related results and some elegant connections between prime numbers and 
irreducible polynomials, the reader is referred to \cite{RamMurty}, \cite{Girstmair} and \cite{BDN3}, for instance. 

Another method to obtain irreducible polynomials is to write prime numbers 
or prime powers as a sum of integers of arbitrary sign, of which one has 
sufficiently large modulus, and to use these integers as coefficients of 
our polynomials. In this respect we refer the reader to \cite{Bonciocat1} 
and \cite{Bonciocat2}, where several irreducibility criteria for polynomials 
that take a prime value or a prime power value and have a coefficient of 
sufficiently large modulus have been obtained. Two such irreducibility 
criteria are given by the following results:
\medskip

{\bf Theorem 6.} \ \emph{If we write a prime number as a sum of integers $
a_{0},\dots,a_{n}$, with $a_{0}a_{n}\neq 0$ and $|a_{0}|>
\sum_{i=1}^{n}|a_{i}|2^{i}$, then the polynomial $\sum_{i=0}^{n}a_{i}X^{i}$
is irreducible over $\mathbb{Q}$.} \medskip

{\bf Theorem 7.} \ \emph{If we write a prime power $p^s$, $s\geq 2$, as a sum 
of integers $a_{0},\dots,a_{n}$ with $a_{0}a_{n}\neq 0$,
$|a_{0}|>\sum_{i=1}^{n}|a_{i}|2^{i}$, and $a_1+2a_2+\cdots +na_n$ not divisible 
by $p$, then the
polynomial $\sum_{i=0}^{n}a_{i}X^{i}$ is irreducible over $\mathbb{Q}$.}
\medskip

Other recent results where prime numbers play a central role in testing 
irreducibility refer to linear combinations of relatively prime polynomials 
\cite{CAV}, \cite{CVZ}, \cite{BBCM2}, and to
compositions of polynomials \cite{Guersenzvaig2} and
\cite{BBCM4}.
Counterparts of such results for the multivariate case may be found in \cite{CVZ2}, \cite{BBCM3}, \cite{BZ1}, and \cite{BZ2}. For some recent fundamental results on reduction, specialization and composition of polynomials in connection with Hilbert Irreducibility Theorem, Bertini-Noether Theorem and Schinzel Hypothesis, we refer the reader to \cite{Debes5}, \cite{BSE}, \cite{BDN1}, \cite{BDN2} and \cite{LBS}.

The aim of this paper is to provide irreducibility criteria that depend on the information on the canonical decomposition of the values that a polynomial $f$ takes at two integer arguments, 
by also using information on the location of their roots, and then to obtain similar results in the multivariate case over an arbitrary field. As we shall see, to obtain sharper irreducibility conditions we will also make use of information on the derivative of $f$, or on the partial derivatives of $f$ in the multivariate case. First of all, let us note that if a polynomial $f(X)\in \mathbb{Z}[X]$ factors as 
$f(X)=g(X)h(X)$ with $g(X),h(X)\in \mathbb{Z}[X]$ and $\deg g\geq 1$, $\deg h\geq 1$, 
then if we fix an arbitrarily chosen integer $a$ with $f(a)\neq 0$, the integers 
$g(a)$ and $h(a)$ are not some arbitrary divisors of $f(a)$, as they must also 
satisfy the equality $f'(a)=g'(a)h(a)+g(a)h'(a)$. 
It implies that the greatest common divisor of $g(a)$ and $h(a)$ divides $f(a)$ and $f'(a)$. 
This suggests the use of the following definition.

\begin{definition}\label{admissible}
Let $f$ be a non-constant polynomial with integer coefficients, and let $a$ be an integer with $f(a)\neq 0$. 
We 
say that an integer $d$ is an {\it admissible divisor} of $f(a)$ if $d\mid f(a)$ and 
\begin{equation}\label{adm2}
\gcd \left( d,\frac{f(a)}{d}\right) \mid \gcd (f(a),f'(a)),
\end{equation}
and we shall denote by $\mathcal{D}_{ad}(f(a))$ the set of all admissible divisors of $f(a)$. 
We say that an integer $d$ is a {\it unitary divisor} of $f(a)$ if $d$ is coprime with $f(a)/d$. 
We denote by $\mathcal{D}_{u}(f(a))$ the set of unitary divisors of $f(a)$.  
\end{definition}

We note that condition (\ref{adm2}) is symmetric in $d$ and its complementary divisor $f(a)/d$, 
and that if $\gcd (f(a),f'(a))=1$, then $\mathcal{D}_{ad}(f(a))$ 
reduces to the set $\mathcal{D}_{u}(f(a))$.

The first result that we will prove relies on information on the 
admissible divisors of $f(a)$ and $f(b)$ for two integers $a,b$. 
Rather surprisingly, the study of the irreducibility of $f$ can be connected with the location of the roots of $f$ inside an Apollonius circle associated 
to the points on the real axis with integer abscisae $a$ and $b$, 
and ratio of the distances to these two points expressed only in terms 
of the admissible divisors of $f(a)$ and $f(b)$. We recall the famous 
result of Apollonius, stating that the set of points $P$ in the plane such that the ratio of  distances from $P$  to two fixed points $A$ and $B$ equals some specified  $k$ is a circle (see  Figure 1),  which may degenerate to a point (for $k\to 0$ or $k\to \infty$) or to a line (for $k\to 1$). 
\begin{center}
\hspace{2.6cm}
\setlength{\unitlength}{8mm}
\begin{picture}(12,5.5)
\linethickness{0.15mm}

\put(-1,2){\vector(1,0){9}}
\put(0,0.25){\vector(0,1){4.75}}

\thicklines
\linethickness{0.1mm}
\put(4,1.3){\line(0,1){0.15}}
\put(4,1.5){\line(0,1){0.15}}
\put(4,1.7){\line(0,1){0.15}}
\put(4,1.9){\line(0,1){0.15}}
\put(4,2.1){\line(0,1){0.15}}
\put(4,2.3){\line(0,1){0.15}}
\put(4,2.5){\line(0,1){0.15}}
\put(4,2.7){\line(0,1){0.15}}
\put(4,2.9){\line(0,1){0.15}}
\put(4,3.1){\line(0,1){0.15}}
\put(4,3.3){\line(0,1){0.15}}
\put(4,3.5){\line(0,1){0.15}}
\put(4,3.7){\line(0,1){0.15}}

{\tiny 
\put(3.25,2.75){$P$}
\put(1.65,3.15){$k>1$}
\put(5.55,3.15){$k<1$}
\put(3.55,4.15){$k=1$}
\put(7.55,3.75){$d(P,B)=k\cdot d(P,A)$}

\put(2.35,1.55){$(a,0)$}
\put(4.75,1.55){$(b,0)$}

\put(2.7,2.2){$A$}
\put(4.97,2.2){$B$}

\put(3.35,0.8){$x=\frac{a+b}{2}$}

}

\put(4,2){\circle{0.08}}

\put(3,2){\circle{0.08}}
\put(3,2){\circle{0.12}}

\put(4.95,2){\circle{0.08}}
\put(4.95,2){\circle{0.12}}

\put(2.666,2){\circle{1.82}}
\put(5.333,2){\circle{1.82}}

\put(3.36,2.53){\circle{0.08}}
\put(3.36,2.53){\circle{0.12}}

\put(3,2){\line(2,3){0.35}}  

\put(3.36,2.53){\line(3,-1){1.6}}  

{\small 
\put(-5.6,-0.65){{\bf Figure 1.}\ The Apollonius circles with respect to a pair of points in the plane}
}
\end{picture}
\end{center}
\bigskip

More precisely, given two points $A=(a,0)$ and $B=(b,0)$ 
and $k>0$,             
the set of points $P=(x,y)$ with $d(P,B)=k\cdot d(P,A)$ is the 
Apollonius circle ${\rm Ap}(a,b,k)$ given by the equation
\begin{equation}\label{Apollonius1}
\left( x-a+\frac{b-a}{k^2-1}\right)^2+y^2=k^2\left( \frac{b-a}{k^2-1}\right) ^2.
\end{equation}
Our first result that establishes a connection between Apollonius 
circles and irreducibility testing is the following.
\begin{theorem}\label{thm0}
Let $f(X)=a_{0}+a_{1}X+\cdots +a_{n}X^{n}$ be a polynomial with integer 
coefficients, and assume that for two integers $a,b$ we have $0<|f(a)|<|f(b)|$. Let
\begin{equation}\label{primulq}
q=\max \left\{ \frac{d_2}{d_1}\leq \sqrt{\frac{|f(b)|}{|f(a)|}}: 
d_1\in \mathcal{D}_{ad}(f(a)), \ d_2\in \mathcal{D}_{ad}(f(b)) \right\} .
\end{equation}

\emph{i)} If $q>1$ and all the roots of $f$ lie inside the Apollonius  
circle ${\rm Ap}(a,b,q)$, then $f$ is irreducible over $\mathbb{Q}$.

\emph{ii)} If $q>1$, all the roots of $f$ lie inside the Apollonius circle ${\rm Ap}(a,b,\sqrt{q})$
and $f$ has no rational roots, then $f$ is irreducible over $\mathbb{Q}$.

\emph{iii)} Assume that $q=1$. If $b>a$ and all the roots of $f$ lie in 
the half-plane $x<\frac{a+b}{2}$, or if $a>b$ and all the roots of $f$ lie 
in the half-plane $x>\frac{a+b}{2}$, then $f$ is irreducible over $\mathbb{Q}$.
\end{theorem}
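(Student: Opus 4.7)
The plan is to argue by contradiction, assuming a factorization $f=gh$ with $g,h\in\mathbb{Z}[X]$ both of positive degree. First I would observe that $|g(a)|$ is an admissible divisor of $f(a)$ and $|g(b)|$ is an admissible divisor of $f(b)$: the derivative identity $f'(a)=g'(a)h(a)+g(a)h'(a)$ (and its analogue at $b$) forces $\gcd(g(a),h(a))\mid\gcd(f(a),f'(a))$, which is exactly condition~\eqref{adm2}.

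Since the product $(|g(b)|/|g(a)|)\cdot(|h(b)|/|h(a)|)$ equals $|f(b)|/|f(a)|$, at least one of these two ratios is at most $\sqrt{|f(b)|/|f(a)|}$; after interchanging $g$ and $h$ if necessary, I would take it to be $|g(b)|/|g(a)|$. The definition of $q$ then yields the upper bound $|g(b)|/|g(a)|\leq q$.

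On the other hand, factoring $g(X)=c\prod_{i=1}^{\deg g}(X-\theta_i)$ over $\mathbb{C}$ gives
\[
\frac{|g(b)|}{|g(a)|}=\prod_{i=1}^{\deg g}\frac{|b-\theta_i|}{|a-\theta_i|},
\]
and each $\theta_i$ is a root of $f$. A short computation from~\eqref{Apollonius1} locating the center of ${\rm Ap}(a,b,k)$ identifies, for $k>1$, the bounded region enclosed by this circle with the set $\{P\in\mathbb{C}:|b-P|>k|a-P|\}$; likewise the half-planes in~(iii) consist precisely of the points with $|b-P|>|a-P|$. Hence in each of the three parts every factor in the product above is strictly greater than the pertinent threshold.

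The three cases now close uniformly. In~(i), every factor exceeds $q$, so $|g(b)|/|g(a)|>q^{\deg g}\geq q$, contradicting the upper bound. In~(iii), every factor exceeds $1=q$, giving the same kind of contradiction. In~(ii), every factor exceeds $\sqrt{q}$, so $|g(b)|/|g(a)|>q^{\deg g/2}$; combined with $|g(b)|/|g(a)|\leq q$ and $q>1$, this forces $\deg g<2$, whence $\deg g=1$ and $g$ supplies a rational root of $f$, contrary to the hypothesis. The delicate step I expect is the geometric identification of the interior of ${\rm Ap}(a,b,k)$ with the correctly oriented inequality $|b-P|>k|a-P|$; once that dictionary is fixed, the admissibility of $|g(a)|$ and $|g(b)|$ together with the root product formula deliver the contradiction mechanically.
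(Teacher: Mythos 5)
Your proposal is correct and follows essentially the same route as the paper: contradiction via a factorization $f=gh$, the derivative identity making $g(a),g(b)$ admissible divisors, the product identity forcing $|g(b)|/|g(a)|\leq q$, and the location of the roots (interior of the Apollonius circle, resp.\ the half-plane, being exactly where $|b-P|>k|a-P|$) driving the contradiction. The only cosmetic differences are that you multiply the lower bounds over all roots of $g$ where the paper extracts a single root with $|b-\theta_i|/|a-\theta_i|\leq q^{1/m}$, and in part (ii) you deduce $\deg g=1$ and then invoke the no-rational-root hypothesis rather than using $\deg g\geq 2$ up front.
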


As we shall see in the sequel, in general it is desirable to work with values
of $q$ in the statement of Theorem \ref{thm0} as small as possible, in order 
to relax the constraints on the two integers $a$ and $b$ that we use. For 
instance, if $b>a$ and we can prove that $q=1$ (which is the minimum possible 
value of $q$), by imposing the condition that $f(X+\frac{a+b}{2})$ is a Hurwitz stable
polynomial, so that all the roots of $f$ lie in the half-plane $x<\frac{a+b}{2}$,       
then by Theorem \ref{thm0} iii) we may conclude that $f$ is irreducible over $\mathbb{Q}$.  As known, a necessary and sufficient condition 
for a polynomial to be Hurwitz stable is that it passes the Routh--Hurwitz test.

In some applications, instead of testing the conditions in Theorem \ref{thm0}, 
it might be more convenient to consider the maximum of the absolute values 
of the roots of $f$, as follows.

\begin{theorem}\label{thm1} 
Let $f(X)=a_{0}+a_{1}X+\cdots +a_{n}X^{n}$ be a polynomial with integer 
coefficients, $M$ the maximum of the absolute values of its roots,
and assume that for two integers $a,b$ we have $0<|f(a)|<|f(b)|$. Let 
$q$ be given by {\em (\ref{primulq})}.

\emph{i)} \ \thinspace \thinspace If $|b|>q|a|+(1+q)M$, then $f$ is 
irreducible over $\mathbb{Q}$. 

\emph{ii)} \thinspace \thinspace If $|b|>\sqrt{q}|a|+(1+\sqrt{q})M$ and 
$f$ has no rational roots, then $f$ is irreducible over $\mathbb{Q}$. 

\emph{iii)} If $q=1$, $a^2<b^2$ and $M<\frac{|a+b|}{2}$, then $f$ 
is irreducible over $\mathbb{Q}$.
\end{theorem}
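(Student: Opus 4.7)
The plan is to deduce Theorem \ref{thm1} directly from Theorem \ref{thm0} by showing that the quantitative hypotheses on $M=\max_{\alpha}|\alpha|$ (where $\alpha$ ranges over the roots of $f$) force every root of $f$ into the geometric region prescribed by Theorem \ref{thm0}.

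For part i), I would use the triangle inequality. For any root $\alpha$ of $f$, since $|\alpha|\le M$, one has
\[
|\alpha-b|\ge |b|-|\alpha|\ge |b|-M\quad\text{and}\quad|\alpha-a|\le |\alpha|+|a|\le M+|a|.
\]
The hypothesis $|b|>q|a|+(1+q)M$ rearranges as $|b|-M>q(|a|+M)$, hence $|\alpha-b|>q|\alpha-a|$. This is precisely the strict inequality that places $\alpha$ strictly inside the Apollonius circle ${\rm Ap}(a,b,q)$ (the side where the distance to $B=(b,0)$ exceeds $q$ times the distance to $A=(a,0)$). Since this holds for every root of $f$, part i) of Theorem \ref{thm0} applies and $f$ is irreducible. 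Part ii) is entirely analogous, with $\sqrt{q}$ in place of $q$, and invokes part ii) of Theorem \ref{thm0} (which is why the extra hypothesis on the absence of rational roots is carried over).

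For part iii), assume $q=1$, $a^2<b^2$ (so $|a|<|b|$) and $M<|a+b|/2$. The argument splits according to the sign of $a+b$. If $a+b>0$, then for any root $\alpha$,
\[
\mathrm{Re}(\alpha)\le|\alpha|\le M<\frac{a+b}{2},
\]
so all roots lie in the half-plane $x<(a+b)/2$; moreover $|a|<|b|$ together with $a+b>0$ forces $b>a$, which is the configuration needed for the first alternative of Theorem \ref{thm0} iii). If $a+b<0$, then $|a+b|/2=-(a+b)/2$ and
\[
\mathrm{Re}(\alpha)\ge-|\alpha|\ge -M>\frac{a+b}{2},
\]
so all roots lie in the half-plane $x>(a+b)/2$, while $|a|<|b|$ and $a+b<0$ force $a>b$, matching the second alternative. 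The case $a+b=0$ is excluded since then $M<0$ is impossible.

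I do not anticipate a genuine obstacle: the arithmetic input (the value of $q$) is taken verbatim from Theorem \ref{thm0}, and all that has to be verified is a handful of triangle-inequality estimates plus the modest case analysis in part iii). The only step requiring a little care is checking that $|a|<|b|$ together with the sign of $a+b$ yields the correct ordering of $a$ and $b$ demanded by Theorem \ref{thm0} iii); once this is noted, the reduction to Theorem \ref{thm0} is immediate.
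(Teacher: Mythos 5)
Your part iii) is exactly the paper's argument (the paper runs the case split on $b>a$ versus $a>b$ and deduces the sign of $a+b$, you go the other way round, which is equivalent), and your triangle-inequality estimate $\frac{|\alpha-b|}{|\alpha-a|}\ge\frac{|b|-M}{|a|+M}>q$ is precisely the inequality the paper uses for parts i) and ii). There is, however, one concrete gap in the way you close parts i) and ii): you invoke Theorem \ref{thm0} i) and ii), whose hypotheses include $q>1$, whereas Theorem \ref{thm1} only guarantees $q\ge 1$, and $q=1$ is not an exceptional case (it is the value of $q$ in all of the paper's corollaries). When $q=1$ the Apollonius circle ${\rm Ap}(a,b,q)$ degenerates to the perpendicular bisector $x=\frac{a+b}{2}$, so ``all roots lie inside ${\rm Ap}(a,b,q)$'' is not a hypothesis you can feed into Theorem \ref{thm0} i) or ii); as written, your proof of i) and ii) simply does not cover $q=1$.

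The gap is easy to repair, in either of two ways. Either note that for $q=1$ your estimate still gives $|\alpha-b|>|\alpha-a|$ for every root $\alpha$, i.e.\ every root lies strictly in the half-plane $x<\frac{a+b}{2}$ if $b>a$ and strictly in $x>\frac{a+b}{2}$ if $a>b$, so Theorem \ref{thm0} iii) applies (the ordering of $a$ and $b$ is sorted out exactly as in your part iii)); or observe that $|b|>|a|+2M$ forces $a^2<b^2$ and $M<\frac{|b|-|a|}{2}\le\frac{|a+b|}{2}$, so the $q=1$ case of i) and ii) is subsumed by part iii). The paper sidesteps the issue altogether by not quoting the statement of Theorem \ref{thm0} at all for i) and ii): it re-enters its proof, takes the inequality $\frac{|b-\theta_i|}{|a-\theta_i|}\le q^{1/m}$ valid for some root $\theta_i$ of a putative factor of degree $m$, and contradicts it via $\frac{|b|-M}{|a|+M}>q\ge q^{1/m}$, which is valid uniformly for all $q\ge 1$. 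With one of these repairs your argument is complete and coincides in substance with the paper's.
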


We note that one can obtain slightly weaker results by allowing $d_1$ 
and $d_2$ in the definition of $q$ in the statement of Theorem \ref{thm0} 
to be arbitrary divisors of $f(a)$ and $f(b)$, respectively. Doing so 
will potentially increase $q$, which will consequently lead to stronger 
restrictions on $|b|$. Even in some particular cases when $f(a)$ and 
$f(b)$ have few prime factors, to derive an effective, explicit formula 
for $q$ in the statement of Theorem \ref{thm0} is a difficult problem 
involving inequalities between products of prime powers.
However, one may obtain many corollaries of this result on the one hand 
by using some classical estimates for polynomial roots that provide
explicit upper bounds for the absolute values of the roots of $f$, and 
on the other hand by considering some special cases for the canonical 
decomposition of the two integers $f(a)$ and $f(b)$. 
                                   
The problem of finding a sharp estimate for the maximum of the absolute 
values of the roots of a given polynomial has a long history that goes 
back centuries ago. Among the earliest such attempts we mention here the 
bounds due to Cauchy and Lagrange. A generalization for Cauchy's bound 
on the largest root of a polynomial was obtained by Mignotte in \cite{MM3}:
\smallskip

{\em If a monic polynomial of height $H$ has $k$ 
roots of maximal modulus $\rho$ then $\rho < 1+H^{1/k}$.}
\smallskip

For a recent improvement of the bound of Lagrange for the maximum modulus 
of the roots we refer the reader to Batra, Mignotte, and \c Stef\u anescu \cite{BMS}. 
Further classical refinements rely on the use of some families of parameters, 
that brings considerably more flexibility, and here we only mention the 
classical methods of Fujiwara \cite{Fujiwara}, Ballieu \cite{Ballieu}, 
\cite{Marden}, Cowling and Thron \cite{Cowling1}, \cite{Cowling2}, 
Kojima \cite{Kojima}, or methods using estimates for the characteristic 
roots for complex matrices \cite{Perron}.

We will only present in this paper some simple corollaries of 
Theorem \ref{thm1}, for some cases when the canonical decompositions of 
$f(a)$ and $f(b)$ allow one to conclude that $q=1$.
\begin{corollary}\label{coro1main}
Let $f(X)=a_{0}+a_{1}X+\cdots +a_{n}X^{n}$ be a polynomial with integer 
coefficients, and $a$, $b$ two integers such that $a^2<b^2$ and
$|a_{n}|>\sum_{i=0}^{n-1}|a_{i}| \bigl(\frac{|a+b|}{2}\bigr) ^{i-n}$. 
Then $f$ is irreducible over $\mathbb{Q}$ in each of the following cases: 

\emph{i)}\ \ $|f(a)|=p^{k}r$, $|f(b)|=p^{k+1}$ with $p$ prime and 
integers $k,r$ with $k\geq 0$ and $0<r<p$;

\emph{ii)} $|f(a)|=p^{k}$, $|f(b)|=p^{k}r$ for some primes $p,r$ 
with $r<p$ and some integer $k\geq 1$.
\end{corollary}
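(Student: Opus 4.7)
The plan is to apply Theorem \ref{thm1} iii), whose hypotheses are $q=1$, $a^2<b^2$, and $M<|a+b|/2$, where $M$ is the maximum modulus of the roots of $f$. The second hypothesis is given outright. For the third, setting $\rho=|a+b|/2$, the coefficient inequality rearranges to $|a_n|\rho^n>\sum_{i=0}^{n-1}|a_i|\rho^i$, and for any $z$ with $|z|\geq\rho$ the reverse triangle inequality yields $|f(z)|\geq|z|^n\bigl(|a_n|-\sum_{i=0}^{n-1}|a_i|\rho^{i-n}\bigr)>0$, so all roots of $f$ lie in the open disk $|z|<\rho$.

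The crux is to establish $q=1$ in each case. Since $d_1=d_2=1$ are always admissible and $1\leq\sqrt{|f(b)|/|f(a)|}$ by the assumption $|f(a)|<|f(b)|$, we have $q\geq 1$ for free. To obtain $q\leq 1$ I would drop admissibility (this only shrinks the candidate set) and verify that no pair $(d_1,d_2)$ of arbitrary positive divisors of $|f(a)|$ and $|f(b)|$ produces a ratio strictly greater than $1$ while satisfying $d_2/d_1\leq\sqrt{|f(b)|/|f(a)|}$. In case (ii) one should first note $r\geq 2$ (as $r$ is prime), so that indeed $|f(a)|<|f(b)|$.

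In case (i) one has $\sqrt{|f(b)|/|f(a)|}=\sqrt{p/r}$; every divisor of $|f(a)|=p^kr$ has the form $p^jr^\epsilon$ with $\epsilon\in\{0,1\}$, and every divisor of $|f(b)|=p^{k+1}$ has the form $p^i$. A case split on $\epsilon$ closes the argument: if $\epsilon=0$, any ratio exceeding $1$ is at least $p>\sqrt{p/r}$; if $\epsilon=1$, the constraint forces $p^{i-j}\leq\sqrt{pr}<p$ (using $r<p$), so $i\leq j$ and $d_2/d_1\leq 1/r\leq 1$. Case (ii), with $\sqrt{|f(b)|/|f(a)|}=\sqrt{r}<\sqrt{p}$ and divisors $(p^j, p^ir^\epsilon)$, reduces to the same kind of elementary inequality and yields the same conclusion.

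The argument has no conceptual obstacle; the only real work is the careful divisor bookkeeping under the square-root constraint, with the hypothesis $r<p$ supplying exactly the slack that makes every admissible ratio collapse to $1$. Once $q=1$ has been verified in both cases, Theorem \ref{thm1} iii) immediately gives the irreducibility of $f$ over $\mathbb{Q}$.
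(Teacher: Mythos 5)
Your overall strategy is exactly the paper's: deduce $M<\frac{|a+b|}{2}$ from the coefficient inequality (the paper invokes Rouch\'e, your direct reverse-triangle estimate is equivalent), check $q=1$ by working with \emph{all} divisors of $f(a)$ and $f(b)$ rather than only the admissible ones, and then conclude by Theorem \ref{thm1} iii). Two remarks, though. First, your parenthetical ``this only shrinks the candidate set'' is stated backwards: dropping admissibility \emph{enlarges} the set of candidate ratios $d_2/d_1$; the logic you actually use (verifying the larger set has no element in $(1,\sqrt{|f(b)|/|f(a)|}\,]$, hence neither does the admissible subset) is the correct and intended direction, the same as in the paper.

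Second, and more substantively, in case i) the hypothesis only says $r$ is an \emph{integer} with $0<r<p$, not a prime, so your claim that every divisor of $|f(a)|=p^kr$ has the form $p^jr^{\varepsilon}$ with $\varepsilon\in\{0,1\}$ is false for composite $r$; the divisors are $p^js$ with $s\mid r$ (note $p\nmid r$ since $r<p$). As written, your $\varepsilon$-split does not cover divisors such as $p^js$ with $1<s<r$. The repair is immediate and is what the paper does: for a ratio $p^{i-j}/s$ with $s\mid r$, either $i\le j$ and the ratio is at most $1$, or $i>j$ and the ratio is at least $p/s\ge p/r>\sqrt{p/r}$, hence excluded by the constraint; so $q=1$ still follows. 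In case ii) both $p$ and $r$ are primes by hypothesis, so your (sketched) enumeration is fine there, and the rest of the argument goes through as you indicate.
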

Note that the irreducibility of $f$ will be guaranteed  solely by the
condition that $|f(b)|$ is a prime number $p$ for some integer $b$ 
with sufficiently large absolute value, without using any information 
on $a$ or on the magnitude of $p$.                                 Indeed, to conclude that $f$ is irreducible it suffices to ask $|f(b)|$ 
to be prime for some integer $b$ with $|b|>M+1$, where $M$ is the maximum 
of the absolute values of the roots of $f$. For a proof of this elementary 
fact and for some of its generalisations we refer the reader to \cite{RamMurty} 
or \cite{Girstmair}, for instance. Thus, if we ask 
$|a_{n}|>2|a_{n-1}|+2^{2}|a_{n-2}|+\cdots +2^{n}|a_{0}|$, for instance, 
then $M<\frac{1}{2}$, so if $|f(b)|$ is prime for an integer $b$ 
with $|b|\geq 2$, then $f$ must be irreducible. However, we may improve this 
result by applying Theorem \ref{thm1} with $a=0$ or Corollary \ref{coro1main} i) 
with $a=k=0$, to also include the cases $b=\pm 1$. This will seemingly come 
at the cost of asking $|f(b)|$ to exceed $|a_0|$, but as we shall see in the 
proof of the following corollary, this apparently additional condition will 
actually be an immediate consequence of our assumption on the magnitude of $|a_n|$.
\begin{corollary}
\label{coro2} Let $f(X)=a_{0}+a_{1}X+\cdots +a_{n}X^{n}$ be a polynomial with 
integer coefficients, with $|a_{n}|>2|a_{n-1}|+2^{2}|a_{n-2}|+\cdots +2^{n}|a_{0}|$ 
and $a_{0}\neq 0$. If $f(\mathbb{Z}\setminus \{0\})$ or $-f(\mathbb{Z}\setminus \{0\})$ 
contains a prime number, then $f$ must be irreducible over $\mathbb{Q}$.
\end{corollary}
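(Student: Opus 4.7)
My plan is to first locate the roots of $f$ and then split the argument into two cases according to the size of the integer $b_0$ at which $|f|$ takes a prime value. The hypothesis $|a_n| > \sum_{i=0}^{n-1} 2^{n-i}|a_i|$ is tailor-made to imply that every root $z$ of $f$ satisfies $|z| < 1/2$: rearranging $f(z) = 0$ and dividing by $z^n$ (valid since $a_0\neq 0$ forces $z\neq 0$), the triangle inequality yields $|a_n| \leq \sum_{i=0}^{n-1}|a_i|\,|z|^{i-n}$, and $|z|\geq 1/2$ would contradict the hypothesis. Hence the maximum modulus $M$ of the roots of $f$ satisfies $M<1/2$.

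Now fix $b_0\in\mathbb{Z}\setminus\{0\}$ with $|f(b_0)|$ prime, the existence of which is precisely the assumption on $f(\mathbb{Z}\setminus\{0\})$ or $-f(\mathbb{Z}\setminus\{0\})$. If $|b_0|\geq 2$, then $|b_0| > M+1$, and the classical criterion recalled just before the corollary (see \cite{RamMurty}, \cite{Girstmair}) immediately settles irreducibility: any nontrivial factorization $f=gh$ in $\mathbb{Z}[X]$ would give $|g(b_0)|\geq (|b_0|-M)^{\deg g}>1$ and likewise $|h(b_0)|>1$, contradicting primality of $|f(b_0)|$. The interesting case, and the reason the present corollary sharpens the classical one, is $b_0=\pm 1$.

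For $b_0=\pm 1$ I plan to invoke Corollary \ref{coro1main} i) with $a=0$, $b=b_0$ and $k=0$. In the notation of that corollary, the pattern $|f(a)|=p^0 r$, $|f(b)|=p$ matches with $r=|a_0|$ and $p=|f(\pm 1)|$; the requirement $a^2<b^2$ reduces to $0<1$; and the coefficient inequality $|a_n|>\sum_{i=0}^{n-1}|a_i|\bigl(\tfrac{|a+b|}{2}\bigr)^{i-n}$ collapses to our standing hypothesis since $|a+b|/2=1/2$. The only nontrivial check, which the authors flag as the ``seemingly additional'' requirement $r<p$, is to verify $|a_0|<|f(\pm 1)|$. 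This I expect to follow from the triangle bound $|f(\pm 1)|\geq |a_n|-\sum_{i=0}^{n-1}|a_i|$ combined with the inequality $|a_n|>\sum_{i=0}^{n-1}2^{n-i}|a_i|\geq 2|a_0|+\sum_{i=1}^{n-1}|a_i|$ (using $2^n\geq 2$ for $n\geq 1$), which gives $|f(\pm 1)|>|a_0|$. That triangle-inequality step is the only real calculation in the proof; everything else is bookkeeping that matches the hypotheses of Corollary \ref{coro1main} i).
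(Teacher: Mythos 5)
Your proposal is correct and follows essentially the paper's own route: both arguments rest on the root bound $M<\tfrac{1}{2}$ forced by $|a_n|>\sum_{i=0}^{n-1}2^{n-i}|a_i|$ and on applying Corollary \ref{coro1main} i) with $a=0$, $k=0$, the only substantive check being that the prime $|f(b_0)|$ exceeds $|a_0|$. The sole difference is bookkeeping: the paper establishes $|f(b)|>|a_0|$ for every $|b|\geq 1$ by one chain of estimates and applies the corollary uniformly, whereas you dispose of $|b_0|\geq 2$ via the classical prime-value criterion and verify $|f(\pm 1)|>|a_0|$ by a simpler triangle-inequality bound; both verifications are valid.
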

We mention that Theorem 6 is a special case of Corollary \ref{coro2}, obtained 
by asking $\tilde{f}(1)$ to be prime, with $\tilde{f}$ the reciprocal of $f$, 
and asking $|a_0|>2|a_1|+2^2|a_2|+\cdots +2^n|a_n|$ instead of 
$|a_{n}|>2|a_{n-1}|+2^{2}|a_{n-2}|+\cdots +2^{n}|a_{0}|$.

Using the well-known Enestr\"om--Kakeya Theorem \cite{Kakeya}, saying 
that all the roots of a polynomial $f(X)=a_{0}+a_{1}X+\cdots +a_{n}X^{n}$ 
with real coefficients satisfying $0\leq a_{0}\leq a_{1}\leq \dots \leq a_{n}$ 
must have absolute values at most $1$, one can also prove the following two results.
\begin{corollary}\label{EK}
Let $f(X)=a_{0}+a_{1}X+\cdots +a_{n}X^{n}$ be an Enestr\"om--Kakeya polynomial 
of degree $n$ with integer coefficients, $a_0\neq 0$, and $a$, $b$ 
two integers with $a^2<b^2$ and $|a+b|>2$. Then $f$ is irreducible over 
$\mathbb{Q}$ in each of the following cases: 

\emph{i)}\ \ $|f(a)|=p^{k}r$, $|f(b)|=p^{k+1}$ with $p$ prime and 
integers $k,r$ with $k\geq 0$ and $0<r<p$;

\emph{ii)} $|f(a)|=p^{k}$, $|f(b)|=p^{k}r$ for some primes 
$p$, $r$ with $r<p$ and some integer $k\geq 1$.
\end{corollary}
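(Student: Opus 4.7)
The plan is to deduce Corollary \ref{EK} directly from Theorem \ref{thm1} iii) by verifying its three hypotheses: $|f(a)|<|f(b)|$ together with $q=1$, $a^2<b^2$, and $M<\tfrac{|a+b|}{2}$. The second is given, and the analysis of the admissible divisors is essentially identical to the one used in Corollary \ref{coro1main}. The new ingredient, which replaces the explicit coefficient hypothesis there, is the Enestr\"om--Kakeya bound on the root modulus.

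First I would check that $|f(a)|<|f(b)|$ in both cases. In case i), $|f(b)|/|f(a)|=p/r>1$ since $0<r<p$; in case ii), $|f(b)|/|f(a)|=r\geq 2$. Next, I would verify that $q=1$. In case i) the divisors of $f(b)=\pm p^{k+1}$ are only powers of $p$, while the divisors of $f(a)=\pm p^k r$ are of the form $p^i$ or $p^i r$. Any ratio $d_2/d_1$ strictly greater than $1$ is then either of the form $p^{j-i}\geq p$ or of the form $p^{j-i}/r\geq p/r$, and a direct comparison with $\sqrt{|f(b)/f(a)|}=\sqrt{p/r}$ shows that neither can satisfy $1<d_2/d_1\leq\sqrt{p/r}$. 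An analogous enumeration in case ii), where $\sqrt{|f(b)/f(a)|}=\sqrt{r}$ with $r<p$ and $p,r$ prime, rules out any power of $p$ in the interval $[\sqrt{r},r)$ and thus yields $q=1$ again. Since $1=1/1$ is always attained with $d_1=d_2=1\in\mathcal{D}_{ad}$, this is exactly the maximum.

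Finally, because $f$ is an Enestr\"om--Kakeya polynomial, the classical theorem of Enestr\"om--Kakeya guarantees that every root of $f$ has absolute value at most $1$, so $M\leq 1$. The assumption $|a+b|>2$ then gives $M\leq 1<\tfrac{|a+b|}{2}$, and combined with $a^2<b^2$ and $q=1$ this exactly matches the hypotheses of Theorem \ref{thm1} iii), whence $f$ is irreducible over $\mathbb{Q}$.

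The main obstacle is the bookkeeping in showing $q=1$, since one must check that no admissible divisor ratio $d_2/d_1$ lies in the half-open interval $(1,\sqrt{|f(b)/f(a)|}\,]$. However, this is exactly the same verification already needed for Corollary \ref{coro1main}, so the new content of the argument reduces to observing that the Enestr\"om--Kakeya hypothesis together with $|a+b|>2$ supplies the bound $M<\tfrac{|a+b|}{2}$ for free, bypassing the coefficient inequality imposed in Corollary \ref{coro1main}.
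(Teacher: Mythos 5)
Your proposal is correct and follows essentially the same route as the paper: invoke the Enestr\"om--Kakeya theorem to get $M\leq 1<\frac{|a+b|}{2}$ from $|a+b|>2$, repeat the divisor-ratio enumeration of Corollary \ref{coro1main} to get $q=1$, and conclude via Theorem \ref{thm1} iii). Only a small slip in wording: in case ii) the interval to be excluded is $\bigl(1,\sqrt{r}\,\bigr]$ rather than $[\sqrt{r},r)$ (and in case i) one should allow $d_1=p^i s$ with $s$ any divisor of $r$, since $r$ need not be prime there), but the enumeration you appeal to handles both points exactly as in Corollary \ref{coro1main}.
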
 
\begin{corollary}\label{EK2}
Let $f(X)=a_{0}+a_{1}X+\cdots +a_{n}X^{n}$ be an Enestr\"om--Kakeya 
polynomial of degree $n$ with integer coefficients, $a_0\neq 0$. If $f(-1)\neq 0$ 
and $|f(b)|$ is a prime number for some integer $b$ with $|b|\geq 2$, 
then $f$ is irreducible over $\mathbb{Q}$.
\end{corollary}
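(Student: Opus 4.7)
The plan is to deduce Corollary \ref{EK2} from Corollary \ref{EK}(i) taken with $k = 0$. Let $p = |f(b)|$, which by hypothesis is a prime with $|b| \geq 2$, and set
\[
a = \begin{cases} 1, & \text{if } b \geq 2,\\ -1, & \text{if } b \leq -2. \end{cases}
\]
Then $a$ and $b$ share the same sign, so $a^2 = 1 < b^2$ and $|a+b| = 1 + |b| \geq 3 > 2$, which supplies the two geometric hypotheses of Corollary \ref{EK}. The hypothesis $a_0 \neq 0$, combined with the Enestr\"om--Kakeya chain $0 \leq a_0 \leq a_1 \leq \cdots \leq a_n$ of integers, forces $a_0,\ldots,a_n \geq 1$, so $f(1) \geq a_n \geq 1$; together with the hypothesis $f(-1) \neq 0$, this yields $|f(a)| \geq 1$ in either case.

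The main step will be to verify $|f(a)| < p$. By the Enestr\"om--Kakeya theorem every complex root $z$ of $f$ satisfies $|z| \leq 1$, and a direct expansion gives
\[
|b-z|^2 - |a-z|^2 = (b - a)\bigl(b + a - 2\operatorname{Re}(z)\bigr).
\]
For my choice of $a$ both factors on the right share the sign of $b$: when $b \geq 2$ and $a = 1$, each is at least $1$; when $b \leq -2$ and $a = -1$, each is at most $-1$. Hence the product is at least $1$, so $|b-z| > |a-z|$ holds pointwise at every root of $f$. Writing $f(X) = a_n \prod_{i=1}^{n}(X - z_i)$ and taking absolute values produces $|f(b)| > |f(a)|$, i.e.\ $0 < |f(a)| < p$.

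Setting $r = |f(a)|$, the identities $|f(a)| = p^0 r$ and $|f(b)| = p^{0+1}$ with $0 < r < p$ match the hypotheses of Corollary \ref{EK}(i), and irreducibility of $f$ over $\mathbb{Q}$ follows. The most delicate point I foresee is the synchronization of the sign of $a$ with that of $b$: for an opposite-sign choice both the pointwise dominance $|b - z| > |a - z|$ and the constraint $|a + b| > 2$ of Corollary \ref{EK} can fail simultaneously (for instance $a = 1, b = -2$ gives $|a+b| = 1$). Once the right $a$ is in hand, the remainder reduces to the root-factorization comparison above.
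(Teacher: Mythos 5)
Your proof is correct, and while it shares the paper's overall skeleton -- choose $a=\pm1$ with the same sign as $b$, establish $0<|f(a)|<|f(b)|=p$, and invoke the $q=1$ machinery (your use of Corollary \ref{EK} i) with $k=0$ is just a packaged form of Theorem \ref{thm1} iii)) -- the way you verify the key inequality is genuinely different and more uniform. The paper splits into two cases: for $|b|\ge 3$ it appeals to the classical remark that a prime value at an argument of modulus exceeding $M+1$ already forces irreducibility (no auxiliary $a$ needed), and for $|b|=2$ it takes the pairs $(a,b)=(1,2)$ and $(-1,-2)$ and checks $|f(-2)|>|f(-1)|>0$ by a parity-of-degree sign analysis of the coefficients, which is exactly where $f(-1)\neq 0$ enters. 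You instead treat every $|b|\ge 2$ at once: the identity $|b-z|^2-|a-z|^2=(b-a)\bigl(b+a-2\operatorname{Re}(z)\bigr)$ together with the Enestr\"om--Kakeya bound $|z|\le 1$ gives the pointwise inequality $|b-z|>|a-z|$ at every root, hence $|f(b)|>|f(a)|$ after multiplying over the factorization (all factors are positive since $|f(a)|\ge 1$); combined with $f(1)\ge 1$ from the positive integer coefficients, resp.\ $|f(-1)|\ge 1$ from the hypothesis, this yields $0<r:=|f(a)|<p$, and Corollary \ref{EK} i) applies because $a^2<b^2$ and $|a+b|=1+|b|\ge 3>2$. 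Your route buys a single uniform argument with no case split on $|b|$ or on the parity of $n$; the paper's route buys the extra observation that for $|b|\ge 3$ the condition $f(-1)\neq 0$ and the second argument $a$ are not needed at all, that hypothesis being relevant only for $b=-2$.
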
 
We note here that condition $f(-1)\neq 0$ cannot be removed, since the 
reducible Enestr\"om--Kakeya polynomial $f(X)=X^3+X^2+X+1$ satisfies 
$|f(-2)|=5$ while $f(-1)=0$.

The proofs of the results stated so far will be given in  Section 2.

When we study the admissible divisors of $f(a)$ and $f(b)$, we distinguish 
the particular cases where $f(n)$ and $f'(n)$ are coprime for at least 
one integer $n\in \{ a,b\}$. Consequently, the set of admissible divisors of 
$f(n)$ reduces in these cases to the set $\mathcal{D}_u(f(n))$ of unitary divisors of $f(n)$.  In Section 3 we will state and prove the results corresponding to the case 
that both relations $\gcd(f(a),f'(a))=1$ and $\gcd(f(b),f'(b))=1$ hold, where 
$q$ will be denoted by $q_u$, to emphasize the role of unitary divisors of 
$f(a)$ and $f(b)$. One can easily state the results corresponding to the 
remaining two cases when only one of these relations holds. Thus we will 
present two results analogous to Theorem \ref{thm0} and Theorem \ref{thm1}, 
namely Theorem \ref{thm0unitary} and Theorem \ref{thm3}, where the radius of 
the related Apollonius circles ${\rm Ap}(a,b,q_u)$ potentially increases. Another 
benefit of using unitary divisors will consist in finding more cases when the 
canonical decompositions of $f(a)$ and $f(b)$ forces $q_u$ to be equal to $1$, 
as we shall see in Corollary \ref{coro3main}.

We will also prove in Section \ref{se4} similar results for multivariate 
polynomials $f(X_{1},\dots ,X_{r})$ over an arbitrary field $K$. The results 
for polynomials in $r\geq 3$ variables will be deduced from the results in 
the bivariate case, by writing $Y$ for $X_{r}$, $X$ for $X_{r-1}$, and 
$K$ for $K(X_{1},\dots ,X_{r-2})$. First, we will need the following definition, 
analogous to Definition \ref{admissible} for the bivariate case.
\begin{definition}\label{admissible2}
Let $K$ be a field, $f(X,Y)\in K[X,Y]$ and $a(X)\in K[X]$ such that 
$f(X,a(X))\neq 0$. We say that a polynomial $d(X)\in K[X]$ is an 
{\it admissible divisor} of $f(X,a(X))$ if $d(X)\mid f(X,a(X))$ and 
\begin{equation}\label{adm2var}
\gcd \left( d(X),\frac{f(X,a(X))}{d(X)}\right) \mid \gcd 
\left( f(X,a(X)),\frac{\partial f}{\partial Y}(X,a(X))\right) .
\end{equation}
We will denote by $D_{ad}(f(X,a(X))$ the set of admissible divisors of 
$f(X,a(X))$. Also, for $f(X,Y)$ and $a(X)$ as above we will denote 
\[
D_{u}(f(X,a(X)))=\{ d\in K[X]:d(X)|f(X,a(X)),\ \gcd\left( d(X),\frac{f(X,a(X))}{d(X)}\right) =1\} ,
\]
and call it the set of {\it unitary divisors} of $f(X,a(X))$. We note that 
in the particular case that 
$\gcd (f(X,a(X)),\frac{\partial f}{\partial Y}(X,a(X)))=1$, 
$D_{ad}(f(X,a(X))$ reduces to $\mathcal{D}_u(f(X,a(X)))$.
\end{definition}
With this definition, we have the following results.
\begin{theorem}\label{thm5}
Let $K$ be a field, $f(X,Y)=a_{0}(X)+a_{1}(X)Y+\cdots +a_{n}(X)Y^{n}\in K[X,Y]$, 
with $a_{0},\dots ,a_{n}\in K[X]$, $a_{0}a_{n}\neq 0$.
Assume that for two polynomials $a(X),b(X)\in K[X]$ we have 
$f(X,a(X))f(X,b(X))\neq 0$ and 
$\Delta:=\frac{1}{2}\cdot (\deg f(X,b(X))-\deg f(X,a(X)))\geq 0$, and let
\[
q=\max \{ \deg d_{2}-\deg d_{1}\leq \Delta :d_{1}\in D_{ad}(f(X,a(X))),d_{2}\in D_{ad}(f(X,b(X))) \}.
\]
If $\deg b(X)>\max \{ \deg a(X),\max \limits_{0\leq i\leq n-1 }
\frac{\deg a_{i}-\deg a_{n}}{n-i} \}+q$, then $f(X,Y)$ is irreducible over $K(X)$.
\end{theorem}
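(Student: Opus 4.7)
The plan is to argue by contradiction, combining Gauss's lemma with a non-Archimedean Cauchy bound on the roots of $f(X,Y)$ regarded as a polynomial in $Y$ over $\overline{K(X)}$. Suppose $f(X,Y)$ factors nontrivially over $K(X)$; by Gauss's lemma we may write $f=gh$ in $K[X,Y]$ with $m:=\deg_Y g\geq 1$ and $n-m=\deg_Y h\geq 1$. For each $c\in\{a,b\}$, specialization gives $f(X,c(X))=g(X,c(X))\,h(X,c(X))$, and the product rule $\partial_Y f=(\partial_Y g)\,h+g\,(\partial_Y h)$ shows that $\gcd(g(X,c(X)),h(X,c(X)))$ divides $\gcd(f(X,c(X)),\partial_Y f(X,c(X)))$. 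Hence $g(X,c(X))$ and $h(X,c(X))$ both lie in $D_{ad}(f(X,c(X)))$ for $c\in\{a,b\}$.

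Next, extend the degree valuation $v(r):=\deg r$ from $K[X]$ to $\overline{K(X)}$ and factor $g(X,Y)=\mathrm{lc}_Y(g)\prod_{i=1}^{m}(Y-\alpha_i)$ over this algebraic closure. Since each $\alpha_i$ is also a root of $f(X,Y)$ in $Y$, the relation $a_n(X)\alpha_i^n=-\sum_{j<n}a_j(X)\alpha_i^j$, together with the ultrametric inequality applied to the right-hand side, yields the Newton-polygon estimate
\[
v(\alpha_i)\leq M:=\max_{0\leq j\leq n-1}\frac{\deg a_j-\deg a_n}{n-j}.
\]
The hypothesis forces $\deg b>\max(\deg a,M)+q\geq M$, so the ultrametric property gives $v(b(X)-\alpha_i)=\deg b$, while $v(a(X)-\alpha_i)\leq\max(\deg a,v(\alpha_i))\leq\max(\deg a,M)$. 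Multiplying over $i=1,\dots,m$ and using that $v$ agrees with $\deg$ on $K[X]$ yields
\[
\Delta_g:=\deg g(X,b(X))-\deg g(X,a(X))\geq m\bigl(\deg b-\max(\deg a,M)\bigr)>mq\geq q,
\]
and analogously $\Delta_h:=\deg h(X,b(X))-\deg h(X,a(X))>(n-m)q\geq q$.

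To conclude, $\Delta_g+\Delta_h=\deg f(X,b(X))-\deg f(X,a(X))=2\Delta$, so after interchanging $g$ and $h$ if necessary we may assume $\Delta_g\leq\Delta$. Then $(g(X,a(X)),g(X,b(X)))$ is a pair of admissible divisors meeting the constraint $\deg d_2-\deg d_1\leq\Delta$ in the definition of $q$, so $\Delta_g\leq q$, contradicting $\Delta_g>q$. The main obstacle will be the non-Archimedean bookkeeping of the second paragraph: establishing $v(\alpha_i)\leq M$ so that $\deg b>M$ triggers the ultrametric equality $v(b-\alpha_i)=\deg b$, and extracting from the hypothesis $\deg b>\max(\deg a,M)+q$ a \emph{strict} inequality that survives multiplication by the positive integers $m$ and $n-m$, so that both $\Delta_g$ and $\Delta_h$ genuinely exceed $q$.
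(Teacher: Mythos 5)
Your argument is correct, and its skeleton is the paper's: Gauss's lemma, the product-rule observation that the specializations $g(X,c(X))$, $h(X,c(X))$ are admissible divisors of $f(X,c(X))$ for $c\in\{a,b\}$, a Cauchy-type bound on the roots of $f$ in $\overline{K(X)}$ (your $M$ is the paper's $\lambda$), and a pigeonhole between the two factors played off against the defining maximum for $q$. Where you genuinely diverge is in the analytic bookkeeping. The paper fixes a multiplicative absolute value $|F|=\rho^{\deg F}$ with $\rho>1$, bounds $|b(X)-\theta_i|/|a(X)-\theta_i|$ from below by the Archimedean-style quotient $(\rho^{\deg b}-\rho^{\lambda})/(\rho^{\deg a}+\rho^{\lambda})$, extracts from (the multiplicative form of) $\Delta_g\leq q$ a single root with ratio at most $\rho^{q/m}$, and obtains the contradiction only after taking $\rho$ sufficiently large. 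You instead work additively with the degree valuation and invoke the ultrametric equality $v(b(X)-\alpha_i)=\deg b$ (legitimate because $\deg b>M\geq v(\alpha_i)$), which gives the exact factor-level estimates $\Delta_g\geq m\bigl(\deg b-\max(\deg a,M)\bigr)$ and $\Delta_h\geq (n-m)\bigl(\deg b-\max(\deg a,M)\bigr)$; the contradiction then appears directly as $\Delta_g>q$ versus $\Delta_g\leq q$, with no auxiliary parameter to push to infinity and no passage through an individual root. This is a real streamlining: the paper transplants the shape of its Archimedean univariate argument, while you exploit the non-Archimedean structure fully. One small omission to repair: you use $q\geq 0$ twice (in $\max(\deg a,M)+q\geq M$ and in $mq\geq q$, $(n-m)q\geq q$); as the paper notes explicitly, this holds because $\Delta\geq 0$ and the constant pair $d_1=d_2=1$ is admissible, so $0$ lies in the set over which $q$ is a maximum — add that one line.
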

In particular, for $a(X)=0$ and $b(X)$ denoted by $g(X)$, we obtain:
\begin{corollary}\label{coro6} 
Let $K$ be a field, $f(X,Y)=a_{0}(X)+a_{1}(X)Y+\cdots +a_{n}(X)Y^{n}\in K[X,Y]$, 
with $a_{0},a_{1},\dots ,a_{n}\in K[X]$, $a_0a_n\neq 0$ and
\[
\deg a_{n} \geq\max \{ \deg a_{0},\deg a_{1},\dots ,\deg a_{n-1}\} .
\]
If for a non-constant polynomial $g(X)\in K[X]$, the polynomial $f(X,g(X))$ 
is irreducible over $K$, then $f(X,Y)$ is irreducible over $K(X)$.
\end{corollary}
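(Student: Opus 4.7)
My plan is to apply Theorem \ref{thm5} with $a(X)=0$ and $b(X)=g(X)$. Then $f(X,a(X))=a_0(X)\neq 0$ by hypothesis and $f(X,b(X))=f(X,g(X))$ is non-zero since it is irreducible, so $f(X,a(X))f(X,b(X))\neq 0$ as required.

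First I would compute the relevant degrees. Because $\deg a_n\geq \deg a_i$ for every $i<n$ and $\deg g\geq 1$, the term $a_n(X)g(X)^n$ has degree $\deg a_n+n\deg g$, strictly larger than the degree $\deg a_i+i\deg g$ of every other summand of $f(X,g(X))$. Hence no cancellation occurs in the leading term and
\[
\deg f(X,g(X))=\deg a_n+n\deg g,\qquad \deg f(X,0)=\deg a_0,
\]
so $\Delta=\tfrac{1}{2}(\deg a_n+n\deg g-\deg a_0)\geq 0$, as needed.

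The core of the argument is the computation of $q$. Because $f(X,g(X))$ is irreducible in $K[X]$, each of its divisors in $K[X]$ is either a unit or an associate of $f(X,g(X))$ itself, and in either case $\gcd(d,f(X,g(X))/d)$ is a unit, so the admissibility condition (\ref{adm2var}) is automatic. Thus every $d_2\in D_{ad}(f(X,g(X)))$ satisfies $\deg d_2\in\{0,\deg a_n+n\deg g\}$. Taking $d_2=f(X,g(X))$ and any $d_1\in D_{ad}(a_0(X))$ (which forces $\deg d_1\leq \deg a_0$) gives
\[
\deg d_2-\deg d_1\geq \deg a_n+n\deg g-\deg a_0>\Delta,
\]
the strict inequality holding because $\deg a_n+n\deg g-\deg a_0\geq n\geq 1>0$ and $\Delta$ is only half of this quantity. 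Therefore the only pair respecting the constraint $\deg d_2-\deg d_1\leq \Delta$ is the trivial one $d_1=d_2=1$, and we conclude $q=0$.

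Finally, since $\deg a_i\leq \deg a_n$ for each $i<n$, every ratio $(\deg a_i-\deg a_n)/(n-i)$ is non-positive; under the convention $\deg 0=-\infty$, the whole right-hand side of the inequality required in Theorem \ref{thm5} is non-positive, while $\deg g\geq 1$. The degree hypothesis of Theorem \ref{thm5} is thus satisfied, and we conclude that $f(X,Y)$ is irreducible over $K(X)$. The only delicate step is the computation of $q=0$, which essentially exploits the irreducibility of $f(X,g(X))$ to rule out the non-trivial choice $d_2=f(X,g(X))$.
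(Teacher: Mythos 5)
Your proof is correct and follows essentially the same route as the paper: specialize Theorem \ref{thm5} at $a(X)=0$, $b(X)=g(X)$, note $\Delta\geq 0$, use the irreducibility of $f(X,g(X))$ to force every divisor $d_2$ to have degree $0$ or $\deg a_n+n\deg g>\Delta$, hence $q=0$, so the degree hypothesis reduces to $\deg g(X)>0$. (One cosmetic slip: pairs with $d_2$ constant and $\deg d_1>0$ also satisfy the constraint $\deg d_2-\deg d_1\leq\Delta$, not only $d_1=d_2=1$; they contribute non-positive values, so the conclusion $q=0$ is unaffected.)
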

Two additional irreducibility criteria that rely on the unitary divisors of 
$f(X,a(X))$ and $f(X,b(X))$ will be also proved in Section \ref{se4}. 
Our results are quite flexible, and provide irreducibility conditions for 
many cases where other irreducibility criteria fail. We will give in the 
last section of the paper a series of examples of infinite families of polynomials 
that are proved to be irreducible by using 
irreducibility criteria proved in previous sections.             
\section{The case of admissible divisors}  \label{se2}

{\it Proof of Theorem \ref{thm0}} \ Assume that $f$ factors as 
$f(X)=a_{n}(X-\theta _{1})\cdots (X-\theta _{n})$ for some complex numbers
$\theta _{1},\dots ,\theta _{n}$. Now let us assume to the contrary that 
$f$ is reducible, so there exist two polynomials $g,h\in \mathbb{Z}[X]$
with $\deg g=m\geq 1$, $\deg h=n-m\geq 1$ such that $f=g\cdot h$. Without 
loss of generality we may further assume that 
\[
g(X)=b_{m}(X-\theta _{1})\cdots (X-\theta _{m})\ \ \mbox{{\rm and}}\ \ 
h(X)=\frac{a_{n}}{b_{m}}(X-\theta _{m+1})\cdots (X-\theta _{n}), 
\]
for some divisor $b_{m}$ of $a_{n}$. 
Now, since $f(a)=g(a)h(a)\neq 0$ and 
$f'(a)=g'(a)h(a)+g(a)h'(a)$, and similarly $f(b)=g(b)h(b)$ and 
$f'(b)=g'(b)h(b)+g(b)h'(b)$, we see that $g(a)$ is a divisor 
$d_{1}$ of $f(a)$, and $g(b)$ is a divisor $d_2$ of $f(b)$
that must also satisfy the following divisibility conditions
\[
\gcd \left( d_1,\frac{f(a)}{d_1}\right) \mid  \gcd (f(a),f'(a))\ 
\mbox{\rm \ and\ }\ \gcd \left( d_2,\frac{f(b)}{d_2}\right) \mid  \gcd (f(b),f'(b)). 
\]
Therefore $d_1$ and $d_2$ are admissible divisors of $f(a)$ and $f(b)$, 
respectively. Similarly, if we denote $h(a)$ by $d_1'$ and $h(b)$ by $d_2'$, 
we see that $d_1'$ and $d_2'$ are also admissible divisors of $f(a)$ and $f(b)$, respectively. 
Next, since
\[
\frac{d_2}{d_1}\cdot \frac{d'_2}{d'_1}=\frac{f(b)}{f(a)},
\]
one of the quotients $\frac{|d_2|}{|d_1|}$ and $\frac{|d'_2|}{|d'_1|}$, 
say $\frac{|d_2|}{|d_1|}$, must be less than or equal to 
$\sqrt{\frac{|f(b)|}{|f(a)|}}$. In particular, we have
\begin{equation}\label{q}
\frac{|g(b)|}{|g(a)|}\leq q.
\end{equation}
We notice here that since $|f(b)|>|f(a)|$ and $1$ is obviously a divisor 
of $f(a)$ and $f(b)$, a possible candidate for $q$ is $1$, so 
$q\geq 1$. Next, we observe that we may write
\[
\frac{g(b)}{g(a)}=\frac{b-\theta_1}{a-\theta_1}\cdots \frac{b-\theta_m}{a-\theta_m},
\]
so in view of (\ref{q}) for at least one index $i\in \{ 1,\dots ,m\} $ 
we must have
\begin{equation}\label{radical}
\frac{|b-\theta_i|}{|a-\theta_i|}\leq q^{\frac{1}{m}}.
\end{equation}
Now, let us first assume that $q>1$ and all the roots of $f$ lie inside 
the Apollonius circle ${\rm Ap}(a,b,q)$. In particular, since $\theta _i$ lies 
inside the Apollonius circle ${\rm Ap}(a,b,q)$, it must satisfy the inequality $|b-\theta_i|>q|a-\theta_i|$.
Since $q>1$ and $m\geq 1$, we have $q\geq q^{\frac{1}{m}}$, so we deduce 
that we actually have
\[
\frac{|b-\theta_i|}{|a-\theta_i|}>q^{\frac{1}{m}},
\]
which contradicts (\ref{radical}). Therefore $f$ must be irreducible.

Next, assume that $q>1$ and that all the roots of $f$ lie inside the 
Apollonius circle ${\rm Ap}(a,b,\sqrt{q})$. In particular, we have 
$|b-\theta_i|>\sqrt{q}|a-\theta_i|$. Since $f$ has no rational roots, 
we must have $m\geq 2$, so $\sqrt{q}\geq q^{\frac{1}{m}}$, which also 
leads us to the desired contradiction 
\[
\frac{|b-\theta_i|}{|a-\theta_i|}>q^{\frac{1}{m}},
\]
thus proving the irreducibility of $f$.

Finally, let us assume that $q=1$, so in this case (\ref{radical}) reads
\[
\frac{|b-\theta_i|}{|a-\theta_i|}\leq 1,
\]
which is equivalent to
\begin{equation}\label{inegab}
(b-Re(\theta _i))^2\leq (a-Re(\theta _i))^2.
\end{equation}
It is easy to see that in order to contradict (\ref{inegab}), it is   
sufficient to ask all the roots of $f$ to lie in the half-plane $x<\frac{a+b}{2}$ 
if $a<b$, or in the half-plane $x>\frac{a+b}{2}$ if $a>b$. 
This completes the proof of the theorem.
\hfill  $\square $

\medskip

{\it Proof of Theorem \ref{thm1}} \ The proof goes as in the case of 
Theorem \ref{thm0}, and we deduce again that for at least one index 
$i\in \{ 1,\dots ,m\} $ we must have
\begin{equation}\label{radical2}
\frac{|b-\theta_i|}{|a-\theta_i|}\leq q^{\frac{1}{m}}.
\end{equation}
On the other hand, if $|b|>q|a|+(1+q)M$ we observe that 
\[
\frac{|b-\theta_i|}{|a-\theta_i|}\geq \frac{|b|-|\theta_i|}{|a|+|\theta_i|}
\geq \frac{|b|-M}{|a|+M}>q\geq q^{\frac{1}{m}}, 
\]
since $q\geq 1$. This contradicts (\ref{radical2}), so $f$ must be 
irreducible over $\mathbb{Q}$. 

In our second case, if we assume that $|b|>\sqrt{q}|a|+(1+\sqrt{q})M$ 
and $f$ has no rational roots, then $m\geq 2$, and consequently 
 \[
\frac{|b-\theta_i|}{|a-\theta_i|}\geq \frac{|b|-M}{|a|+M}>\sqrt{q}\geq q^{\frac{1}{m}}, 
\]
again a contradiction.

Finally, let us assume that $q=1$, $a^2<b^2$ and
$M<\frac{|a+b|}{2}$. 
If $b>a$, then $a+b>0$ and the conclusion follows by Theorem \ref{thm0} iii) 
since the disk $|z|\leq M$ containing all the roots of $f$ lies in the 
left half-plane $x<\frac{a+b}{2}$. Finally, if $a>b$, then $a+b<0$ and 
the disk $|z|\leq M$ lies in the right half-plane $x>\frac{a+b}{2}$, 
since $-M>\frac{a+b}{2}$.  \hfill  $\square $       

\medskip

{\it Proof of Corollary \ref{coro1main}} \ An immediate consequence of 
Rouch\'e's Theorem is that the condition 
$|a_{n}|>\sum_{i=0}^{n-1}|a_{i}|\bigl(\frac{|a+b|}{2}\bigr) ^{i-n}$ forces 
all the roots of $f$ to have absolute values less than $\frac{|a+b|}{2}$. 
Therefore $M<\frac{|a+b|}{2}$. In the first case we observe that if 
$|f(a)|=p^{k}r$ and $|f(b)|=p^{k+1}$, with $p$ prime and $0<r<p$, then any 
positive quotient $\frac{d_{2}}{d_{1}}$ with $d_{1}\mid f(a)$ and 
$d_{2}\mid f(b)$ has the form $\frac{p^{i}}{s}$ with $i$ an integer 
satisfying $-k\leq i\leq k+1$, and $s$ a divisor of $r$. For $i\leq 0$ 
these quotients will be at most $1$, while for $i>0$ all the corresponding 
quotients will exceed $\sqrt{\frac{p}{r}}$, as $\frac{p}{s}>\sqrt{\frac{p}{r}}$. 
Thus $q=1$ in this first case. 

Next, if $|f(a)|=p^{k}$ and $|f(b)|=p^{k}r$, any positive quotient 
$\frac{d_{2}}{d_{1}}$ with $d_{1}\mid f(a)$ and $d_{2}\mid f(b)$ 
has the form $p^{i}r^{\varepsilon }$ with $i$ an integer satisfying 
$-k\leq i\leq k$ and $\varepsilon \in \{ 0,1\} $. For $\varepsilon =0$, 
no such quotient other than 1 belongs to the interval [$1,\sqrt{r}$], 
since $p>\sqrt{r}$. Finally, we observe that for $\varepsilon =1$ no 
integer $i$ can satisfy the condition  $1< p^{i}r< \sqrt{r}$ since $p>r$. 

So in both cases $q$ must be equal to $1$. The conclusion now follows 
from Theorem \ref{thm1}.  \hfill   $\square $             

\medskip

{\it Proof of Corollary \ref{coro2}} \ 
Our assumption on the magnitude of $|a_n|$ forces all the roots of $f$ 
to have absolute values less than $\frac{1}{2}$, so $M<\frac{1}{2}$.  
We may now apply Theorem \ref{thm1} with $a=0$ or Corollary \ref{coro1main} i) 
with $a=k=0$ to deduce that $f$ is irreducible over $\mathbb{Q}$ if $b\neq 0$. 
All that remains now is to prove that our condition 
$|a_{n}|>2|a_{n-1}|+2^{2}|a_{n-2}|+\cdots +2^{n}|a_{0}|$ together with 
the fact that $|b|\geq 1$ also force the prime number $|f(b)|$ to exceed 
$|a_0|$. Indeed, we successively deduce that
\begin{eqnarray*}
|f(b)| & = & |a_0+a_1b+\cdots +a_nb^n|\geq |b|^n|a_n|-|b|^{n-1}|a_{n-1}|-\cdots -|b|\cdot |a_1|-|a_0|\\
  & > & |b|^{n}(2|a_{n-1}|+2^2|a_{n-2}|+\cdots +2^n
|a_0|)-|b|^{n-1}|a_{n-1}|-\cdots -|b|\cdot |a_1|-|a_0|\\
  & = & |b|^{n-1}(2|b|-1)|a_{n-1}|+|b|^{n-2}(2^2|b|^2-1)|a_{n-2}|+\cdots +(2^n|b|^n-1)|a_0|\\
 & \geq & (2^n|b|^n-1)|a_0|\geq (2^n-1)|a_0|\geq |a_0|,
\end{eqnarray*}
and this completes the proof.  \hfill $\square $               
\medskip

{\it Proof of Corollary \ref{EK}} \ Here, by the Enestr\"om--Kakeya 
Theorem all the roots of $f$ must have modulus at most $1$, so $M\leq 1$. 
Arguing as in the proof of Corollary \ref{coro1main} one may prove that 
$q=1$ in both cases, and the proof finishes by applying Theorem \ref{thm1}.  \hfill $\square $                
\medskip

{\it Proof of Corollary \ref{EK2}} \ We note here that since an 
Enestr\"om--Kakeya polynomial $f$ has all the roots of modulus at most $1$, 
it will be irreducible over $\mathbb{Q}$ if $|f(b)|$ is a prime for some 
integer $b$ with $|b|\geq 3$. If we consider now an additional integer 
argument $a$ and ask $f(a)\neq 0$ and $|f(b)|=p$ for some prime number 
$p>|f(a)|$, this will force $q$ to be equal to $1$, and will guarantee 
the irreducibility of $f$ via Theorem \ref{thm1} if $a^2<b^2$ and 
$|a+b|>2$. This will also allow us to use the pairs $(a,b)=(1,2)$ and 
$(a,b)=(-1,-2)$. In the first case, if $f(2)$ is a prime number, then 
it will obviously exceed $f(1)$, since $f$ has positive coefficients. 
Let us consider the remaining case $(a,b)=(-1,-2)$. If $n$ is even, 
one can easily check that $f(-2)>f(-1)\geq 0$, so if we ask $f(-1)\neq 0$, 
then condition $|f(-2)|>|f(-1)|>0$ will be obviously satisfied. On the 
other hand, if $n$ is odd, one can check that $f(-2)<f(-1)\leq 0$, so      
if $f(-1)\neq 0$, the condition $|f(-2)|>|f(-1)|>0$ will be again 
satisfied. By Theorem \ref{thm1}, $f$ will be irreducible in both cases. \hfill $\square $  
\medskip

\section{The case of unitary divisors}  \label{se3}
The aim of this section is to find irreducibility conditions by studying 
the unitary divisors of $f(a)$ and $f(b)$. Here instead of $q$ given by 
(\ref{primulq}), we will use a potentially smaller rational number, defined by
\begin{equation}\label{aldoileaq}
q_{u}=\max \left\{ \frac{d_2}{d_1}\leq \sqrt{\frac{|f(b)|}{|f(a)|}}: 
d_1\in \mathcal{D}_u(f(a)), \ d_2\in \mathcal{D}_u(f(b))\right\} .
\end{equation}
With this notation we have the  following irreducibility criterion.
\begin{theorem}\label{thm0unitary}
Let $f(X)=a_{0}+a_{1}X+\cdots +a_{n}X^{n}\in \mathbb{Z}[X]$, and assume 
that for two integers $a,b$ we have $0<|f(a)|<|f(b)|$ and 
$\gcd(f(a),f'(a))=\gcd(f(b),f'(b))=1$. Let also $q_u$ be given by {\em (\ref{aldoileaq})}.

\emph{i)} If $q_{u}>1$ and all the roots of $f$ lie inside the  
Apollonius circle ${\rm Ap}(a,b,q_u)$, then $f$ is irreducible over $\mathbb{Q}$.

\emph{ii)} If $q_{u}>1$, all the roots of $f$ lie inside the Apollonius 
circle ${\rm Ap}(a,b,\sqrt{q_u})$ and $f$ has no rational roots, then $f$ 
is irreducible over $\mathbb{Q}$.

\emph{iii)} Assume that $q_{u}=1$. If $b>a$ and all the roots of $f$ lie 
in the half-plane $x<\frac{a+b}{2}$, or if $a>b$ and all the roots of $f$ 
lie in the half-plane $x>\frac{a+b}{2}$, then $f$ is irreducible over $\mathbb{Q}$.
\end{theorem}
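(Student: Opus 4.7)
The plan is to mimic the proof of Theorem \ref{thm0}, replacing the admissibility argument by the stronger coprimality afforded by the hypotheses $\gcd(f(a),f'(a))=\gcd(f(b),f'(b))=1$. I would begin by assuming, for contradiction, that $f$ factors over $\mathbb{Z}$ as $f=g\cdot h$ with $\deg g=m\geq 1$ and $\deg h=n-m\geq 1$, and write $g(X)=b_m(X-\theta_1)\cdots(X-\theta_m)$, $h(X)=(a_n/b_m)(X-\theta_{m+1})\cdots(X-\theta_n)$ for some divisor $b_m$ of $a_n$.

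Next, I would exploit the derivative identities $f'(a)=g'(a)h(a)+g(a)h'(a)$ and $f'(b)=g'(b)h(b)+g(b)h'(b)$, which imply
\[
\gcd(g(a),h(a))\mid \gcd(f(a),f'(a))=1\quad\text{and}\quad \gcd(g(b),h(b))\mid \gcd(f(b),f'(b))=1.
\]
Consequently $g(a)$ is a unitary divisor $d_1$ of $f(a)$ and $g(b)$ is a unitary divisor $d_2$ of $f(b)$, and similarly the values of $h$ furnish complementary unitary divisors. Since $(d_2/d_1)\cdot(d_2'/d_1')=f(b)/f(a)$, at least one of the two ratios of absolute values, say $|g(b)|/|g(a)|$, is bounded above by $\sqrt{|f(b)|/|f(a)|}$, and hence by $q_u$ in view of definition \eqref{aldoileaq}. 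Writing
\[
\frac{g(b)}{g(a)}=\prod_{i=1}^{m}\frac{b-\theta_i}{a-\theta_i},
\]
I deduce that for at least one index $i\in\{1,\ldots,m\}$,
\[
\frac{|b-\theta_i|}{|a-\theta_i|}\leq q_u^{1/m}.
\]

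From this inequality the three cases follow exactly as in the proof of Theorem \ref{thm0}. In case (i), the assumption that $\theta_i$ lies strictly inside ${\rm Ap}(a,b,q_u)$ gives $|b-\theta_i|>q_u|a-\theta_i|\geq q_u^{1/m}|a-\theta_i|$, contradicting the bound. In case (ii), the absence of rational roots forces $m\geq 2$, so $\sqrt{q_u}\geq q_u^{1/m}$, and the assumption on ${\rm Ap}(a,b,\sqrt{q_u})$ yields the same contradiction. In case (iii), when $q_u=1$, the bound becomes $|b-\theta_i|\leq|a-\theta_i|$, equivalent to $(b-\operatorname{Re}\theta_i)^2\leq(a-\operatorname{Re}\theta_i)^2$, which is violated by the half-plane hypothesis on the roots.

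The argument is essentially parallel to that of Theorem \ref{thm0}, and no real obstacle is anticipated: the only substantive point is the observation that the coprimality of $f(n)$ and $f'(n)$ promotes $g(n)$ from merely an admissible divisor to a genuinely unitary divisor of $f(n)$ for $n\in\{a,b\}$, which allows $q$ in the earlier argument to be replaced throughout by the potentially smaller quantity $q_u$. Everything else is a transcription of the previous proof.
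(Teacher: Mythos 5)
Your proposal is correct and follows essentially the same route as the paper: the coprimality hypotheses upgrade $g(a),h(a),g(b),h(b)$ from admissible to unitary divisors, the product identity bounds $|g(b)|/|g(a)|$ by $q_u$, and the three cases are then settled exactly as in Theorem \ref{thm0} with $q$ replaced by $q_u$. No gaps.
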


\begin{proof}\ Using the same notations as in the proof of Theorem \ref{thm0}, 
we see that conditions $\gcd(f(a),f'(a))=\gcd(f(b),f'(b))=1$ together 
with the divisibility conditions 
\[
\gcd \left( d_1,\frac{f(a)}{d_1}\right) \mid  \gcd (f(a),f'(a))\ 
\mbox{\rm and}\ \gcd \left( d_2,\frac{f(b)}{d_2}\right) \mid  \gcd (f(b),f'(b))
\]
will force $d_1$ to be a unitary divisor of $f(a)$, and $d_2$ to be 
a unitary divisor of $f(b)$. Similarly, $h(a)$ must be a unitary divisor 
$d_1'$ of $f(a)$ and $h(b)$ must be a unitary divisor $d_2'$ of $f(b)$. Since
\[
\frac{d_2}{d_1}\cdot \frac{d'_2}{d'_1}=\frac{f(b)}{f(a)},
\]
one of the quotients $\frac{|d_2|}{|d_1|}$ and $\frac{|d'_2|}{|d'_1|}$, 
say $\frac{|d_2|}{|d_1|}$, must be less than or equal to 
$\sqrt{\frac{|f(b)|}{|f(a)|}}$. In particular, instead of (\ref{q}), 
we obtain $\frac{|g(b)|}{|g(a)|}\leq q_u$. We note that  we will still 
have $q_{u}\geq 1$, since $1$ belongs to both $\mathcal{D}_u(f(a))$ and 
$\mathcal{D}_u(f(b))$. The proof continues as in the case of 
Theorem \ref{thm0}, with $q_u$ instead of $q$.
\end{proof}
\begin{theorem}\label{thm3} 
Let $f(X)=a_{0}+a_{1}X+\cdots +a_{n}X^{n}$ be a polynomial with integer 
coefficients, $M$ the maximum of the absolute values of its roots,
and assume that for two integers $a,b$ we have $0<|f(a)|<|f(b)|$ 
and $\gcd(f(a),f'(a))=\gcd(f(b),f'(b))=1$. Let also $q_u$ be given by relation {\em (\ref{aldoileaq})}.

\emph{i)} \ \thinspace \thinspace If $|b|>q_{u}|a|+(1+q_{u})M$, 
then $f$ is irreducible over $\mathbb{Q}$. 

\emph{ii)} \ If $|b|>\sqrt{q_{u}}|a|+(1+\sqrt{q_{u}})M$ and $f$ has 
no rational roots, then $f$ is irreducible over $\mathbb{Q}$. 

\emph{iii)} If $q_u=1$, $a^2<b^2$ and $M<\frac{|a+b|}{2}$, then 
$f$ is irreducible over $\mathbb{Q}$.
\end{theorem}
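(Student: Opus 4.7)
The plan is to mimic the proof of Theorem \ref{thm1} essentially verbatim, with $q_u$ playing the role of $q$, justifying this replacement by the coprimality hypotheses $\gcd(f(a),f'(a))=\gcd(f(b),f'(b))=1$, which (as already exploited in the proof of Theorem \ref{thm0unitary}) force every admissible divisor appearing in the factorization argument to be actually a unitary divisor.

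More concretely, I would suppose for contradiction that $f=gh$ with $g,h\in\mathbb{Z}[X]$ of degrees $m\geq 1$ and $n-m\geq 1$, and write $g(X)=b_m(X-\theta_1)\cdots(X-\theta_m)$ for the root factorization. From $f(a)=g(a)h(a)$, $f'(a)=g'(a)h(a)+g(a)h'(a)$ (and similarly at $b$), I would observe that $\gcd(g(a),h(a))$ divides $\gcd(f(a),f'(a))=1$, so $g(a)$ and $h(a)$ are unitary divisors of $f(a)$; likewise $g(b)$ and $h(b)$ are unitary divisors of $f(b)$. Since $\frac{g(b)}{g(a)}\cdot\frac{h(b)}{h(a)}=\frac{f(b)}{f(a)}$, at least one of these two quotients, say the first, has absolute value at most $\sqrt{|f(b)|/|f(a)|}$, hence $|g(b)|/|g(a)|\leq q_u$. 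Expanding this as a product of the ratios $|b-\theta_i|/|a-\theta_i|$ yields at least one index $i$ with
\[
\frac{|b-\theta_i|}{|a-\theta_i|}\leq q_u^{1/m}.
\]

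For part (i), the hypothesis $|b|>q_u|a|+(1+q_u)M$ and $|\theta_i|\leq M$ give
\[
\frac{|b-\theta_i|}{|a-\theta_i|}\geq\frac{|b|-M}{|a|+M}>q_u\geq q_u^{1/m},
\]
the last inequality holding because $q_u\geq 1$ (since $1$ is a unitary divisor of both $f(a)$ and $f(b)$). This contradicts the displayed bound, so $f$ is irreducible. For part (ii), the absence of rational roots forces $m\geq 2$, so $\sqrt{q_u}\geq q_u^{1/m}$, and the hypothesis $|b|>\sqrt{q_u}|a|+(1+\sqrt{q_u})M$ yields the analogous contradiction $\frac{|b-\theta_i|}{|a-\theta_i|}>\sqrt{q_u}\geq q_u^{1/m}$.

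For part (iii), when $q_u=1$ and $M<\frac{|a+b|}{2}$, I would simply invoke Theorem \ref{thm0unitary} (iii): if $b>a$ then $a+b>0$, so the closed disk $|z|\leq M$ lies strictly in the half-plane $x<\frac{a+b}{2}$ (which therefore contains all roots of $f$); if $a>b$ then $a+b<0$ and the disk lies in the half-plane $x>\frac{a+b}{2}$. In either case Theorem \ref{thm0unitary} (iii) gives irreducibility. There is no genuine obstacle here; the main thing to be careful about is that the coprimality hypotheses must be invoked precisely at the step where $\gcd(g(a),h(a))$ and $\gcd(g(b),h(b))$ are shown to be $1$, so that $q_u$ (rather than $q$) controls the quotient $|g(b)|/|g(a)|$.
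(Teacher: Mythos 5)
Your proposal is correct and follows essentially the same route as the paper, which proves Theorem \ref{thm3} by repeating the argument of Theorem \ref{thm1} with $q_u$ in place of $q$, using the coprimality hypotheses exactly as in Theorem \ref{thm0unitary} to upgrade admissible divisors to unitary ones (including the observation $q_u\geq 1$ and the appeal to part (iii) of the unitary-divisor Apollonius theorem for the half-plane case). Nothing is missing.
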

\begin{proof} \ The 
proof is similar to that of Theorem \ref{thm1}, with $q_u$ instead of $q$.
\end{proof}
In particular, we obtain the following irreducibility criterion that
complements Corollary \ref{coro1main}, by allowing one to consider 
only the unitary divisors of $f(a)$ and $f(b)$.
\begin{corollary} \label{coro3main} 
Let $f(X)=a_{0}+a_{1}X+\cdots +a_{n}X^{n}$ be a polynomial with 
integer coefficients, and $a$, $b$ two integers such that $a^2<b^2$ and
$|a_{n}|>\sum_{i=0}^{n-1}|a_{i}|(\frac{|a+b|}{2}) ^{i-n}$. Then $f$ is 
irreducible over $\mathbb{Q}$ in each of the following four cases: 

\emph{i)} $|f(a)|=p^{k_{1}}r$, $|f(b)|=p^{k_{2}}$ for some prime 
number $p$ and some  integers $k_{1},k_{2},r$ with $0\leq k_{1}<k_{2}$, 
$0<r<p$, $p\nmid f'(a)f'(b)$ and $r\nmid f'(a)$;

\emph{ii)} $|f(a)|=p^{k}$, $|f(b)|=p^{k}r^{j}$ for two distinct 
prime numbers $p$, $r$ and  some positive integers $k,j$ with 
$p^{k}>r^{j}$, $p\nmid f'(a)f'(b)$ and $r\nmid f'(b)$.

\emph{iii)} $|f(a)|=p^{u}$, $|f(b)|=q^{v}r^{t}$ for three distinct 
prime numbers $p,q,r$ and  some positive integers $u,v,t$ with 
$p^{u}>q^{v}$, $p^{u}>r^{t}$, $p^{u}<q^{v}r^{t}$, 
$p\nmid f'(a)$, $q\nmid f'(b)$ and $r\nmid f'(b)$.

\emph{iv)} $|f(a)|=p^{u}q^{v}$, $|f(b)|=r^{k}s^{l}$ for four 
distinct prime numbers $p,q,r,s$ and some positive integers $u,v,k,l$ 
with $p^{u}>r^{k}>s^{l}>q^{v}$, $p^{u}q^{v}<r^{k}s^{l}$, 
$p\nmid f'(a)$, $q\nmid f'(a)$, $r\nmid f'(b)$ and $s\nmid f'(b)$.
\end{corollary}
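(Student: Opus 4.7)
The plan is to reduce Corollary \ref{coro3main} to Theorem \ref{thm3} iii), which requires three ingredients: the bound $M < \frac{|a+b|}{2}$ on the maximum modulus of the roots of $f$, the coprimality conditions $\gcd(f(a),f'(a)) = \gcd(f(b),f'(b)) = 1$, and the equality $q_u = 1$. The first ingredient is immediate from Rouch\'{e}'s theorem applied exactly as in the proof of Corollary \ref{coro1main}: the hypothesis $|a_{n}|>\sum_{i=0}^{n-1}|a_{i}|\bigl(\frac{|a+b|}{2}\bigr)^{i-n}$ forces every root of $f$ to have modulus strictly less than $\frac{|a+b|}{2}$. The coprimality conditions hold because in each of the four cases $|f(a)|$ and $|f(b)|$ are supported on a small set of primes, and the divisibility hypotheses $p\nmid f'(a)$, $p\nmid f'(b)$, $r\nmid f'(a)$, etc., exclude every such prime from the corresponding gcd.

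The heart of the proof is then to verify $q_u = 1$ in each of the four cases. Since a unitary divisor of $p_{1}^{e_{1}}\cdots p_{s}^{e_{s}}$ is obtained by choosing, for each $i$, either $1$ or the whole factor $p_{i}^{e_{i}}$, the unitary divisors of $|f(a)|$ and $|f(b)|$ form short, explicit lists with at most four elements each. I would form the finite list of quotients $d_2/d_1$ and check that no entry strictly exceeding $1$ is $\leq \sqrt{|f(b)|/|f(a)|}$. For instance, in case iii) the candidates strictly larger than $1$ are $q^v$, $r^t$, $q^v r^t$, and $q^v r^t/p^u$; after squaring, these are ruled out, respectively, by $q^v p^u > r^t$ (from $p^u > r^t$), $r^t p^u > q^v$ (from $p^u > q^v$), the trivial $q^v r^t p^u > 1$, and $q^v r^t > p^u$. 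Cases i) and ii) are handled in exactly the same spirit, with shorter lists.

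The main obstacle is case iv), where there are sixteen quotients to inspect and as many as eight can a priori exceed $1$. The chain of strict inequalities $p^u > r^k > s^l > q^v$ together with $p^u q^v < r^k s^l$ is arranged precisely so that, once squared, each candidate $d_2/d_1 > 1$ produces a consequence incompatible with one of the given hypotheses; for example, the candidate $r^k s^l/p^u$ being $\leq \sqrt{|f(b)|/|f(a)|}$ forces $r^k s^l q^v \leq p^u$, contradicting $r^k s^l > p^u q^v$, while $r^k/q^v$ forces $r^k p^u \leq s^l q^v$, contradicting both $p^u > s^l$ and $r^k > q^v$. Once $q_u = 1$ is established in all four cases, an appeal to Theorem \ref{thm3} iii) yields the irreducibility of $f$ over $\mathbb{Q}$.
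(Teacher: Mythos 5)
Your proposal is correct and follows essentially the same route as the paper's proof: the Rouch\'e bound $M<\frac{|a+b|}{2}$ exactly as in Corollary \ref{coro1main}, a case-by-case enumeration of the unitary divisor quotients $d_2/d_1$ showing $q_u=1$ (your squared-inequality checks in cases iii) and iv) match the paper's), and then an appeal to Theorem \ref{thm3} iii). The only caveat is your blanket claim that the hypotheses force $\gcd(f(a),f'(a))=\gcd(f(b),f'(b))=1$: in case i) the integer $r$ is not assumed prime, so $r\nmid f'(a)$ does not literally exclude the prime factors of $r$ from $\gcd(f(a),f'(a))$ — but this is precisely the same point the paper's own proof passes over silently when it invokes Theorem \ref{thm3}, and the conclusion survives (e.g.\ by running case i) through the admissible-divisor version, Theorem \ref{thm1}).
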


\begin{proof} \ Here, as in the proof of 
Corollary \ref{coro1main} we have $M<\frac{|a+b|}{2} $. 

Assuming now that $|f(a)|=p^{k_{1}}r$ and $|f(b)|=p^{k_{2}}$ with 
$0<r<p$ and $k_{1}<k_{2}$, then any $d_1\in\mathcal{D}_u(f(a))$ 
is of the form $s,p^{k_{1}}$ or $p^{k_1}s$, with $s\in\mathcal{D}_u(r)$, 
while a unitary divisor $d_2$ of $f(b)$ is either $1$ or $p^{k_{2}}$. 
Therefore $\frac{d_2}{d_1}$ is either $\frac{1}{d_1}$, which is at 
most $1$, or is of the form $\frac{p^{k_2}}{s}$, $p^{k_2-k_1}$,  
$\frac{p^{k_2-k_1}}{s}$ with $s\in\mathcal{D}_u(r)$. It suffices   
to observe that for each $s\in\mathcal{D}_u(r)$ we have 
$\frac{p^{k_2-k_1}}{s}>\sqrt{\frac{p^{k_2}}{p^{k_1}r}}$, as 
$p^{k_2-k_1}>\frac {s^2}{r}$. Thus $q_{u}=1$ in this first case.

For our second case let us assume that $|f(a)|=p^{k}$ and $|f(b)|=p^{k}r^{j}$ 
for two distinct primes $p$, $r$ and some positive integers $k,j$ with 
$p^{k}>r^{j}$. Then $\mathcal{D}_u(f(a))=\{ 1,p^{k}\} $ and 
$\mathcal{D}_u(f(b))=\{ 1,p^{k},r^{j},p^{k}r^{j}\} $.  Therefore any 
quotient $\frac{d_{2}}{d_{1}}$ with $d_{1}\in \mathcal{D}_u(f(a))$ 
and $d_{2}\in \mathcal{D}_u(f(b))$ belongs to the set 
$\{ 1,p^{k},r^{j},p^{k}r^{j},\frac{1}{p^{k}},\frac{r^{j}}{p^{k}}\}$, 
so here again it holds                 
$q_{u}=1$, since according to our assumption that $p^{k}>r^{j}$, the 
only such quotient in the interval [$1,r^{\frac{j}{2}}$] is $1$.

In our third case we have $\mathcal{D}_u(f(a))=\{ 1,p^{u}\} $ and 
$\mathcal{D}_u(f(b))=\{ 1,q^{v},r^{t},q^{v}r^{t}\} $, so any quotient 
$\frac{d_{2}}{d_{1}}$ with $d_{1}\in \mathcal{D}_u(f(a))$ and $d_{2}=1$ 
will be at most $1$, while any quotient $\frac{d_{2}}{d_{1}}$ with 
$d_{1}\in \mathcal{D}_u(f(a))$ and $d_{2}=q^{v}r^{t}$ will exceed 
$\sqrt{\frac{q^{v}r^{t}}{p^{u}}}$. We are thus left with the case that 
\[
\frac{d_2}{d_1}\in \left\{ q^v,\frac{q^v}{p^u},r^{t},\frac{r^t}{p^u}\right\} .
\]
It is now plain to see that while $\frac{q^v}{p^u}$ and $\frac{r^t}{p^u}$ 
are less than $1$, both $q^{v}$ and $r^{t}$ exceed 
$\sqrt{\frac{q^{v}r^{t}}{p^{u}}}$, so here  we have $q_{u}=1$ too.  

In our last case we have $\mathcal{D}_u(f(a))=\{ 1,p^{u},q^{v}, p^{u}q^{v}\} $ 
and $\mathcal{D}_u(f(b))=\{ 1,r^{k},s^{l},r^{k}s^{l}\} $. Here we first 
note that any quotient $\frac{d_{2}}{d_{1}}$ with $d_{1}\in \mathcal{D}_u(f(a))$ 
and $d_{2}=1$ will be at most $1$, and any quotient $\frac{d_{2}}{d_{1}}$ 
with $d_{1}\in \mathcal{D}_u(f(a))$ and $d_{2}=r^{k}s^{l}$ will exceed 
$\sqrt{\frac{r^{k}s^{l}}{p^{u}q^{v}}}$. We are therefore left with the case that 
\[
\frac{d_2}{d_1}\in \left\{ r^k,\frac{r^k}{p^u},\frac{r^k}{q^v},
\frac{r^k}{p^uq^v},s^l,\frac{s^l}{p^u},\frac{s^l}{q^v},\frac{s^l}{p^uq^v}\right\} .
\]
Using now our hypothesys that $p^u>r^k>s^l>q^v$ it is easy to check that each 
of the quotients $\frac{r^k}{p^u},\frac{r^k}{p^uq^v},\frac{s^l}{p^u},\frac{s^l}{p^uq^v}$ 
is less than $1$, while each of the remaining ones 
$r^k,\frac{r^k}{q^v},s^l,\frac{s^l}{q^v}$ exceeds 
$\sqrt{\frac{r^{k}s^{l}}{p^{u}q^{v}}}$, so here $q_u=1$ as well.
The irreducibility of $f$ now follows from Theorem \ref{thm3}. 
\end{proof}
The reader may naturally wonder if there exists a result analogous to 
Corollary \ref{coro2}, that uses information on the prime power values 
of a polynomial, instead of its prime values. The answer is affirmative, and one can      
prove the following result, which illustrates a situation when there is 
no need to impose both conditions $\gcd(f(a),f'(a))=1$ and $\gcd(f(b),f'(b))=1$.
\begin{corollary}
\label{corovechi} Let $f(X)=a_{0}+a_{1}X+\cdots +a_{n}X^{n}\in \mathbb{Z}[X]$ 
with $a_0a_n\neq 0$ and $|a_{n}|>2|a_{n-1}|+2^{2}|a_{n-2}|+\cdots +2^{n}|a_{0}|$. 
If for a non-zero integer $m$, a prime number $p$ and an integer $k\geq 2$ we 
have $|f(m)|=p^{k}$ and $p\nmid f'(m)$, then $f$ is irreducible over $\mathbb{Q}$.
\end{corollary}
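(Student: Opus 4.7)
The plan is to apply Theorem \ref{thm1} iii) with the choice $a=0$ and $b=m$. The strategy mirrors the proof of Corollary \ref{coro2}: the coefficient condition $|a_{n}|>2|a_{n-1}|+\cdots +2^{n}|a_{0}|$ already forces the maximum modulus $M$ of the roots of $f$ to be strictly less than $\frac{1}{2}$ (one checks via Rouch\'e's theorem, or directly by the triangle inequality, that $f$ has no root $z$ with $|z|\ge \frac{1}{2}$). Since $m\neq 0$ is an integer, $|m|\ge 1$, hence $\frac{|a+b|}{2}=\frac{|m|}{2}\geq \frac{1}{2}>M$, and the condition $a^{2}=0<m^{2}=b^{2}$ is trivially satisfied. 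Thus the geometric hypothesis of Theorem \ref{thm1} iii) will hold as soon as we can show that $q=1$.

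Before computing $q$, I first need $0<|f(0)|<|f(m)|$. The inequality $|f(0)|=|a_{0}|\neq 0$ is given, and the computation at the end of the proof of Corollary \ref{coro2} shows that under the coefficient condition and $|m|\geq 1$ we in fact have
\[
|f(m)|\ \geq \ (2^{n}|m|^{n}-1)|a_{0}|\ \geq \ |a_{0}|+1,
\]
so $|f(m)|=p^{k}>|a_{0}|$ comes for free from the hypotheses, without appealing to $k\geq 2$ or to the prime power structure.

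It remains to prove $q=1$. The hypothesis $p\nmid f'(m)$ together with $|f(m)|=p^{k}$ yields $\gcd(f(m),f'(m))=1$, so by the remark after Definition \ref{admissible} the admissible divisors of $f(m)$ are exactly its unitary divisors, whose absolute values are $1$ and $p^{k}$. Thus every quotient $d_{2}/d_{1}$ appearing in (\ref{primulq}) has $|d_{2}|\in\{1,p^{k}\}$ and $|d_{1}|$ a positive divisor of $|a_{0}|$. When $|d_{2}|=1$ the quotient is at most $1$. When $|d_{2}|=p^{k}$, the inequality $|d_{2}|/|d_{1}|\leq \sqrt{p^{k}/|a_{0}|}$ would force $|d_{1}|\geq p^{k/2}\sqrt{|a_{0}|}$, hence $|d_{1}|^{2}\geq p^{k}|a_{0}|>|a_{0}|^{2}$, contradicting $|d_{1}|\leq |a_{0}|$. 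So no admissible quotient exceeds $1$ while staying within the allowed range, and the maximum $q$ equals $1$. Theorem \ref{thm1} iii) now delivers the irreducibility of $f$ over $\mathbb{Q}$.

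The only mildly delicate point is the quick verification that $q=1$, and in particular that the case $|d_{2}|=p^{k}$ contributes nothing — but this reduces to the elementary inequality $|a_{0}|<p^{k}$ established in the preliminary step, so no genuine obstacle is expected.
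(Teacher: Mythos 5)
Your proposal is correct and follows essentially the same route as the paper's own proof: take $a=0$, $b=m$, get $M<\tfrac12$ from the coefficient condition, show $|f(m)|>|a_0|$ by the same chain of inequalities as in Corollary \ref{coro2}, observe that $p\nmid f'(m)$ makes the admissible divisors of $f(m)$ unitary so that $q=1$ (because any quotient with $|d_2|=p^k$ overshoots $\sqrt{p^k/|a_0|}$ since $|a_0|<p^k$), and then invoke Theorem \ref{thm1} while deliberately avoiding Theorem \ref{thm3}, exactly as the paper does to dodge the superfluous condition $\gcd(a_0,a_1)=1$. One tiny slip: the intermediate bound $(2^n|m|^n-1)|a_0|\ge |a_0|+1$ can fail when $n=1$ and $|m|=1$, but the strict inequality earlier in the chain (coming from $|a_n|>2|a_{n-1}|+\cdots+2^n|a_0|$) still yields $|f(m)|>|a_0|$, so the argument stands.
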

\begin{proof} \ Assume first that for a polynomial $f$ with integer coefficients 
and two integers $a$, $b$ we have $0<|f(a)|<|f(b)|=p^{k}$ for some prime number 
$p$ and some positive integer $k$. Let us also assume that $p\nmid f'(b)$. 
Then $\mathcal{D}_{ad}(f(b))=\mathcal{D}_{u}(f(b))$ and any positive quotient 
$\frac{d_{2}}{d_{1}}$ in the definition of $q$ in Theorem \ref{thm1} either 
has the form $\frac{1}{d_{1}}$ with $d_{1}$ an admissible divisor of $f(a)$, 
and hence is at most $1$, or is equal to $\frac{p^{k}}{d_{1}}$, which obviously 
exceeds $\sqrt{\frac{p^{k}}{|f(a)|}}$. Therefore in this case $q$ must be equal 
to $1$. In particular, if we take $a=0$ and $b=m$, and assume that 
$0<|a_0|<|f(m)|=p^{k}$ for some prime number $p$ and some integer $k\geq 2$, 
and also assume that $p\nmid f'(m)$, then the corresponding $q$ must be equal 
to $1$. Since all the roots of $f$ have absolute values less than $\frac{1}{2}$,
we have $M<\frac{1}{2}$. By Theorem \ref{thm1} we conclude that $f$ is irreducible 
over $\mathbb{Q}$ if $|m|>2M$, or equivalently, if $|m|\geq 1$, since 
$M<\frac{1}{2}$. All that remains is to prove that the inequalities $|m|\geq 1$ 
and $|a_{n}|>2|a_{n-1}|+2^{2}|a_{n-2}|+\cdots +2^{n}|a_{0}|$ actually force 
$|f(m)|$ to exceed $|a_0|$. As in the case of Corollary \ref{coro2}, we deduce 
successively that
\begin{eqnarray*}
|f(m)| & = & |a_0+a_1m+\cdots +a_nm^n|\geq |m|^n |a_n|-|m|^{n-1}|a_{n-1}|-
\cdots -|m|\cdot |a_1|-|a_0|\\  
& > & |m|^{n}(2|a_{n-1}|+2^2|a_{n-2}|+
\cdots +2^n|a_0|)-|m|^{n-1}|a_{n-1}|-\cdots -|m|\cdot |a_1|-|a_0|\\ 
& = & |m|^{n-1}(2|m|-1)|a_{n-1}|+|m|^{n-2}(2^2|m|^2-1)|a_{n-2}|+\cdots +(2^n|m|^n-1)|a_0|\\
& \geq & (2^n|m|^n-1)|a_0|\geq (2^n-1)|a_0|\geq |a_0|,
\end{eqnarray*}
completing the proof. We note that using Theorem \ref{thm3} instead of 
Theorem \ref{thm1} would impose here the unnecessary aditional condition 
$\gcd(f(0),f'(0))=1$, that is $\gcd(a_0,a_1)=1$.
\end{proof}
We mention here that Theorem 7 is a special case of Corollary \ref{corovechi}, 
obtained by considering the reciprocal of $f$ instead of $f$.

\section{The case of multivariate polynomials}  \label{se4}
{\it Proof of Theorem \ref{thm5}} \ We will first introduce a nonarchimedean
absolute value $|\cdot|$ on $K(X)$, as follows. We first fix an arbitrary real number
$\rho>1$, and for any polynomial $F(X)\in K[X]$ we define $|F(X)|$ by the equality
\[
|F(X)|=\rho^{\deg F(X)}.
\]
We then extend this absolute value $|\cdot|$ to $K(X)$ by multiplicativity,
that is, for any polynomials $F(X),G(X)\in K[X]$, $G(X)\neq 0$, we let   
$\left| \frac{F(X)}{G(X)} \right|=\frac{|F(X)|}{|G(X)|}$.  
Here we must note that for any non-zero element $F$ of $K[X]$ one has $|F|\geq 1$.

Let now $\overline{K(X)}$ be a fixed algebraic closure of $K(X)$, and let us fix an
extension of our absolute value $|\cdot|$ to $\overline{K(X)}$, which we will
also denote by $|\cdot|$.

Suppose $f$ as a polynomial in $Y$ with coefficients in $K[X]$ factorizes as 
\[
f(X,Y)=a_{n}(X)(Y-\theta _{1})\cdots (Y-\theta _{n})
\]
for some $\theta _{1},\dots ,\theta _{n}\in \overline{K(X)}$. 

Next, we will prove that 

\begin{equation}\label{grade}
\max \{ |\theta _{1}|,\dots ,|\theta _{n}|  \} \leq \rho ^ {\ \max 
\limits_{0\leq i\leq n-1 }\frac{\deg a_{i}-\deg a_{n}}{n-i}}.
\end{equation}

To prove this claim, let $\lambda :=\max \limits_{0\leq i\leq n-1 }
\frac{\deg a_{i}-\deg a_{n}}{n-i}$, and let us assume to the contrary 
that $f$ has a root $\theta $ with $|\theta |> \rho ^{\lambda }$. Since 
$\theta \ne 0$ and our absolute value also satisfies the triangle 
inequality, we successively deduce that
\begin{eqnarray*}
0=\left| \sum\limits _{i=0}^{n}a_{i}\theta ^{i-n}\right| & \geq & 
|a_{n}|- \left| \sum\limits _{i=0}^{n-1}a_{i}\theta ^{i-n}\right|
\geq |a_{n}|- \max\limits _{0\leq i\leq n-1}|a_{i}|\cdot |\theta |^{i-n}\\
& > & |a_{n}|- \max\limits _{0\leq i\leq n-1}|a_{i}|\cdot \rho^{(i-n)\lambda },
\end{eqnarray*}
yielding $|a_{n}|< \max\limits _{0\leq i\leq n-1}|a_{i}|\cdot \rho^{(i-n)\lambda }$, 
or equivalently
\begin{equation}\label{gradenou}
\deg a_{n}< \max\limits _{0\leq i\leq n-1}\{ \deg a_{i}+(i-n)\lambda \} .
\end{equation}
Let us select now an index $k\in \{ 0,\dots , n-1\} $ for which the maximum 
in the right side of (\ref{gradenou}) is attained. Then we deduce that
\[
\deg a_{n}< \deg a_{k}+(k-n)\lambda ,
\]
which leads us to
\[
\frac{\deg a_{k}-\deg a_{n}}{n-k}>\max \limits_{0\leq i\leq n-1 }\frac{\deg a_{i}-\deg a_{n}}{n-i},
\]
a contradiction. Therefore (\ref{grade}) holds, so 
$|\theta _{i}|\leq\rho ^{\lambda }$ for $i=1,\dots ,n$.

Now let us assume to the contrary that $f$ is reducible, so by the celebrated 
Gauss' Lemma there exist two polynomials $g,h\in K[X,Y]$
with $\deg _{Y} g=m\geq 1$, $\deg _{Y} h=n-m\geq 1$ such that $f=g\cdot h$. 
Without loss of generality we may further assume that 
\[
g(X,Y)=b_{m}(X)(Y-\theta _{1})\cdots (Y-\theta _{m})\ \ \mbox{{\rm and}}\ \ 
h(X,Y)=\frac{a_{n}(X)}{b_{m}(X)}(Y-\theta _{m+1})\cdots (Y-\theta _{n}), 
\]
for some divisor $b_{m}(X)$ of $a_{n}(X)$. Since we have $f(X,a(X))=g(X,a(X))h(X,a(X))$ 
and $f(X,b(X))=g(X,b(X))h(X,b(X))$, and also
\begin{eqnarray*}
\frac{\partial{f}}{\partial{Y}}(X,a(X)) & = & 
\frac{\partial{g}}{\partial{Y}}(X,a(X))h(X,a(X))+g(X,a(X))\frac{\partial{h}}{\partial{Y}}(X,a(X)),\\
\frac{\partial{f}}{\partial{Y}}(X,b(X)) & = & 
\frac{\partial{g}}{\partial{Y}}(X,b(X))h(X,b(X))+g(X,b(X))\frac{\partial{h}}{\partial{Y}}(X,b(X)),
\end{eqnarray*}
we see that $g(X,a(X))$ is a divisor $d_{1}$ of $f(X,a(X))$, and $g(X,b(X))$ is a divisor 
$d_2$ of $f(X,b(X))$
that must also satisfy the following divisibility conditions
\begin{eqnarray*}
\gcd \left( d_1,\frac{f(X,a(X))}{d_1}\right) & \mid & \gcd 
\left( f(X,a(X)),\frac{\partial f}{\partial Y}(X,a(X))\right) \ \mbox{\rm and}\\ 
\gcd \left( d_2,\frac{f(X,b(X))}{d_2}\right) & \mid & \gcd 
\left( f(X,b(X)),\frac{\partial f}{\partial Y}(X,b(X))\right) . 
\end{eqnarray*}
Recalling condition (\ref{adm2var}), we see that $g(X,a(X))$ is actually 
an admissible divisor $d_{1}$ of $f(X,a(X))$, and $g(X,b(X))$ is an 
admissible divisor $d_2$ of $f(X,b(X))$. Similarly, $h(X,a(X))$ is an 
admissible divisor $d_{1}'$ of $f(X,a(X))$, and $h(X,b(X))$ is an 
admissible divisor $d_2'$ of $f(X,b(X))$. Therefore we have
\[
\frac{g(X,b(X))}{g(X,a(X))}=\frac{d_2(X)}{d_1(X)}\ \mbox{\rm and}\ 
\frac{h(X,b(X))}{h(X,a(X))}=\frac{d'_2(X)}{d'_1(X)} 
\]
with $d_1,d'_{1}\in D_{ad}(f(X,a(X)))$ and $d_2,d'_2\in D_{ad}(f(X,b(X)))$. Now, since
\[
\frac{d_2(X)}{d_1(X)}\cdot \frac{d'_2(X)}{d'_1(X)}=\frac{f(X,b(X))}{f(X,a(X))},
\]
by applying $|\cdot |$, we see that one of the quotients $\frac{|d_2(X)|}{|d_1(X)|}$ 
and $\frac{|d'_2(X)|}{|d'_1(X)|}$, say $\frac{|d_2(X)|}{|d_1(X)|}$, must be 
less than or equal to 
$\sqrt{\frac{|f(X,b(X))|}{|f(X,a(X))|}}$. In particular, we have
\begin{equation}\label{qmultiv}
\frac{|g(X,b(X))|}{|g(X,a(X))|}\leq \rho ^{q}.
\end{equation}
We notice now that $q\geq 0$ because $\Delta \geq 0$ and $d_{1}=1$, $d_{2}=1$ 
is obviously a pair of divisors of  $f(X,a(X))$ and $f(X,b(X))$, 
respectively, so  that $0=\deg 1-\deg 1$ is a possible candidate for 
$q$. Next, we observe that we may write
\[
\frac{g(X,b(X))}{g(X,a(X))}=\frac{b(X)-\theta_1}{a(X)-\theta_1}\cdots 
\frac{b(X)-\theta_m}{a(X)-\theta_m},
\]
so in view of (\ref{qmultiv}) for at least one index $i\in \{ 1,\dots ,m\} $ 
we must have
\begin{equation}\label{radicalmultiv}
\frac{|b(X)-\theta_i|}{|a(X)-\theta_i|}\leq \rho ^{\frac{q}{m}}.
\end{equation}
On the other hand, since our absolute value also satisfies the triangle 
inequality, we see that
\[
\frac{|b(X)-\theta_i|}{|a(X)-\theta_i|}\geq \frac{|b(X)|-|\theta_i|}{|a(X)|+|\theta_i|}
\geq \frac{|b(X)|-\rho ^{\lambda }}{|a(X)|+\rho ^{\lambda }}
=\frac{\rho ^{\deg b(X)}-\rho ^{\lambda }}{\rho ^{\deg a(X)}+\rho ^{\lambda }}.
\]
We will now prove that for a sufficiently large $\rho $ one has
\[
\frac{\rho ^{\deg b(X)}-\rho ^{\lambda }}{\rho ^{\deg a(X)}+\rho ^{\lambda }}>
\rho ^{q}\geq \rho ^{\frac{q}{m}},
\]
and this will contradict (\ref{radicalmultiv}). The inequality 
$\rho ^{q}\geq \rho ^{\frac{q}{m}}$ obviously holds for an arbitrary
$\rho >1$ since $q\geq 0$ and $m=\deg _{Y}g\geq 1$. Finally, all that 
remains to see is that the first inequality is equivalent to
\[
\rho ^{\deg b(X)}>\rho ^{q+\deg a(X)}+\rho ^{q+\lambda }+\rho ^{\lambda },
\]
which will obviously hold for a sufficiently large $\rho $, since 
according to our assumption on the magnitude of $\deg b(X)$ we have 
$\deg b(X)>\max \{ q+\deg a(X), q+\lambda ,\lambda \} $.
Therefore $f$ must be irreducible over $K(X)$,
and this completes the proof.  \hfill  $\square $

\medskip

{\it Proof of Corollary \ref{coro6}} \ We may apply Theorem \ref{thm5} 
with $a(X)=0$ and $b(X)$ any non-constant polynomial (denoted here by 
$g(X)$) such that $f(X,b(X))$ is irreducible over $K$. To see this, we 
first observe that $f(X,a(X))=f(X,0)=a_0\neq 0$ and since $f(X,b(X))$ 
is irreducible, we also have $f(X,b(X))\neq 0$. Next, the inequality 
$\deg a_n\geq \max \{ \deg a_0,\dots ,\deg a_{n-1} \} $ shows that 
\begin{equation}\label{diferenta}
\deg f(X,b(X))=n\deg b+\deg a_n>\deg a_0=\deg f(X,a(X)),
\end{equation}
so the condition $\Delta \geq 0$ is also satisfied. It remains to prove 
that in this case we have $q=0$. To prove this equality, we note that 
any divisor $d_{2}$ of the irreducible polynomial $f(X,b(X))$ either 
has degree $0$ or has degree $n\deg b+\deg a_n$, so $\deg d_2-\deg d_1$ 
in the definition of $q$ is equal either to $-\deg d_1$, which is at most 
$0$, or to $n\deg b+\deg a_n-\deg d_1$, which exceeds $\Delta $, in 
view of inequality (\ref{diferenta}). Therefore our condition 
$\deg b(X)>\max \{ \deg a(X),\max \limits_{0\leq i\leq n-1 }
\frac{\deg a_{i}-\deg a_{n}}{n-i} \}+q$
reduces in this case to $\deg b(X)>0$.  \hfill  $\square $
\medskip

We mention that in analogy to the univariate case, when testing the 
irreducibility of $f(X,Y)$ in terms of two of its values $f(X,a(X))$ 
and $f(X,b(X))$, we don't necessarily need to impose conditions on 
both partial derivatives $\frac{\partial f}{\partial Y}(X,a(X))$ and 
$\frac{\partial f}{\partial Y}(X,b(X))$. However, we will only state 
here a result for the case that $f(X,a(X))$ and 
$\frac{\partial f}{\partial Y}(X,a(X))$ are relatively prime, and 
$f(X,b(X))$ and $\frac{\partial f}{\partial Y}(X,b(X))$ are also relatively prime. 
\begin{theorem}\label{thm7}
Let $K$ be a field, $f(X,Y)=a_{0}(X)+a_{1}(X)Y+\cdots +a_{n}(X)Y^{n}\in K[X,Y]$, 
with $a_{0},\dots ,a_{n}\in K[X]$, $a_{0}a_{n}\neq 0$. Assume that for two 
polynomials $a(X),b(X)\in K[X]$ we have $f(X,a(X))f(X,b(X))\neq 0$ and 
$\Delta:=\frac{1}{2}\cdot (\deg f(X,b(X))-\deg f(X,a(X)))\geq 0$, and let
\[
q_{u}=\max \{ \deg d_{2}-\deg d_{1}\leq \Delta :d_{1}\in \mathcal{D}_u(f(X,a(X))),d_{2}\in \mathcal{D}_u(f(X,b(X))) \}.
\]
If $\gcd(f(X,a(X)), \frac{\partial f}{\partial Y}(X,a(X)))=1$,
$\gcd(f(X,b(X)), \frac{\partial f}{\partial Y}(X,b(X)))=1$ and
\[
\deg b(X)>\max \left\{ \deg a(X),\max \limits_{0\leq i\leq n-1 }\frac{\deg a_{i}-\deg a_{n}}{n-i} \right\} +q_{u},
\]
then $f(X,Y)$ is irreducible over $K(X)$.
\end{theorem}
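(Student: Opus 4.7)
The plan is to deduce Theorem \ref{thm7} as an immediate consequence of Theorem \ref{thm5}. The key observation is that under the coprimality hypotheses $\gcd(f(X,a(X)), \tfrac{\partial f}{\partial Y}(X,a(X))) = 1$ and $\gcd(f(X,b(X)), \tfrac{\partial f}{\partial Y}(X,b(X))) = 1$, the admissibility condition (\ref{adm2var}) collapses to unitarity: whenever $d(X) \mid f(X,a(X))$, the condition (\ref{adm2var}) reads $\gcd(d(X), f(X,a(X))/d(X)) \mid 1$, forcing $d(X)$ to be a unitary divisor of $f(X,a(X))$, and the same reasoning applies at $b(X)$. Hence $D_{ad}(f(X,a(X))) = \mathcal{D}_u(f(X,a(X)))$ and $D_{ad}(f(X,b(X))) = \mathcal{D}_u(f(X,b(X)))$.

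It follows at once that the parameter $q$ appearing in the statement of Theorem \ref{thm5} coincides exactly with the parameter $q_u$ defined in Theorem \ref{thm7}. The degree hypothesis on $\deg b(X)$ is then identical to the one required by Theorem \ref{thm5}, so that theorem delivers the irreducibility of $f(X,Y)$ over $K(X)$ without further work.

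Should one prefer a self-contained proof, it suffices to repeat the argument of Theorem \ref{thm5} verbatim while replacing every occurrence of \emph{admissible} by \emph{unitary}: equip $K(X)$ with the non-Archimedean absolute value $|F(X)| = \rho^{\deg F}$ for a fixed $\rho > 1$, extend it to $\overline{K(X)}$, bound the $Y$-roots of $f$ by $\rho^\lambda$ with $\lambda = \max_{0 \leq i \leq n-1}(\deg a_i - \deg a_n)/(n-i)$, assume a factorization $f = g\cdot h$ with both factors of positive $Y$-degree, and use the product rule for $\partial f/\partial Y$ together with the coprimality hypotheses to conclude that $g(X,a(X))$ and $h(X,a(X))$ are coprime, hence genuine unitary divisors of $f(X,a(X))$; the same conclusion holds at $b(X)$. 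The remainder of the argument---extracting a root $\theta_i$ of $g$ (or $h$) with $|b(X) - \theta_i|/|a(X) - \theta_i| \leq \rho^{q_u/m}$ from the multiplicativity of $|\cdot|$, and contradicting this bound by means of the hypothesis $\deg b(X) > \max\{\deg a(X), \lambda\} + q_u$ once $\rho$ is chosen sufficiently large---is unchanged. The only genuinely new point relative to Theorem \ref{thm5} is the initial reduction from admissible to unitary divisors, and I expect no real obstacle beyond this bookkeeping.
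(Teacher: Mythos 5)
Your proposal is correct and follows essentially the paper's own route: the paper proves Theorem \ref{thm7} by rerunning the proof of Theorem \ref{thm5} after noting that the coprimality hypotheses force the divisors $d_1,d_1',d_2,d_2'$ arising from a hypothetical factorization to be unitary, which is exactly your observation that (\ref{adm2var}) collapses to unitarity (already recorded in Definition \ref{admissible2}). Your slicker first route---identifying $D_{ad}$ with $\mathcal{D}_u$ at both $a(X)$ and $b(X)$, hence $q=q_u$, and invoking Theorem \ref{thm5} as a black box---is a valid and clean packaging of the same idea.
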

\begin{proof} \ Here, with the same notations as in the proof of 
Theorem \ref{thm5}, we see that $d_{1}$ and $d_1'$ must belong to 
$\mathcal{D}_u(f(X,a(X)))$, while $d_{2}$ and $d_2'$ must belong 
to $\mathcal{D}_u(f(X,b(X)))$. We notice here that even if the defining set 
for $q_{u}$ is in this case smaller than the one for $q$ in Theorem \ref{thm5},
we will still have $q_{u}\geq 0$, since $1$ belongs to both 
$\mathcal{D}_u(f(X,a(X)))$ and $\mathcal{D}_u(f(X,b(X)))$. The rest of the proof 
is identical to that of Theorem \ref{thm5}, and will be omitted.
\end{proof}
In particular, we obtain as a special case the following irreducibility 
criterion that complements Corollary \ref{coro6}, by allowing $f(X,g(X))$ 
to be a power of an irreducible polynomial, instead of an irreducible polynomial.
\begin{corollary}\label{coro8} 
Let $K$ be a field, $f(X,Y)=a_{0}(X)+a_{1}(X)Y+\cdots +a_{n}(X)Y^{n}\in K[X,Y]$, 
with $a_{0},a_{1},\dots ,a_{n}\in K[X]$, $a_0a_n\neq 0$ and
\[
\deg a_{n} \geq\max \{ \deg a_{0},\deg a_{1},\dots ,\deg a_{n-1}\} .
\]
If for a non-constant polynomial $g(X)\in K[X]$, the polynomial $f(X,g(X))$ 
is a power of an irreducible polynomial over $K$, and $f(X,g(X))$ and 
$\frac{\partial f}{\partial Y}(X,g(X))$ are relatively prime, then 
$f(X,Y)$ is irreducible over $K(X)$.
\end{corollary}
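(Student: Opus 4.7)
The plan is to apply Theorem~\ref{thm5} with $a(X)=0$ and $b(X)=g(X)$, in the spirit of the proof of Corollary~\ref{coro6}. First I would verify the basic hypotheses: $f(X,0)=a_0(X)\neq 0$, and $f(X,g(X))\neq 0$ since it is a nonzero power of an irreducible polynomial. Because $\deg a_n\geq\deg a_i$ for every $i$ and $\deg g\geq 1$, the leading contribution in $X$ to $f(X,g(X))$ comes from $a_n(X)g(X)^n$, so $\deg f(X,g(X))=n\deg g+\deg a_n>\deg a_0=\deg f(X,0)$, which gives $\Delta=\tfrac{1}{2}(\deg f(X,g(X))-\deg f(X,0))>0$, as required by Theorem~\ref{thm5}.

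The crucial step is to show that the quantity $q$ in Theorem~\ref{thm5} equals $0$. I would write $f(X,g(X))=p(X)^k$ with $p(X)\in K[X]$ irreducible and $k\geq 1$. Every divisor of $p(X)^k$ then has the form $p(X)^j$ for some $0\leq j\leq k$, with complementary divisor $p(X)^{k-j}$ and $\gcd(p(X)^j,p(X)^{k-j})=p(X)^{\min(j,k-j)}$. Since $\gcd(f(X,g(X)),\frac{\partial f}{\partial Y}(X,g(X)))=1$ by hypothesis, admissibility forces $\min(j,k-j)=0$, so $D_{ad}(f(X,g(X)))=\{1,p(X)^k\}$. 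If $d_2=p(X)^k$, then $\deg d_2-\deg d_1\geq\deg f(X,g(X))-\deg f(X,0)=2\Delta>\Delta$, violating the upper bound in the definition of $q$; hence only $d_2=1$ contributes, and the maximum $q=0$ is attained at the admissible pair $d_1=d_2=1$.

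Finally, the degree hypothesis $\deg a_n\geq\deg a_i$ makes $\max_{0\leq i\leq n-1}\frac{\deg a_i-\deg a_n}{n-i}\leq 0$, so the inequality required by Theorem~\ref{thm5} reduces to $\deg g(X)>0$, which is exactly the non-constancy of $g$. Invoking Theorem~\ref{thm5} will then yield the irreducibility of $f(X,Y)$ over $K(X)$. The hard part will be pinpointing $q=0$: it rests on coupling the extremely restricted divisor structure of a prime power with the coprimality assumption on the partial derivative, and on exploiting the strict inequality $2\Delta>\Delta$ to rule out the rival candidate $d_2=p(X)^k$.
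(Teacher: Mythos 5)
Your argument is correct and structurally parallel to the paper's: apply the two-point theorem with $a(X)=0$, $b(X)=g(X)$, use the degree inequality $\deg f(X,g(X))=n\deg g+\deg a_n>\deg a_0=\deg f(X,0)$ to get $\Delta>0$, show the relevant $q$ vanishes because the only divisors of the prime power $p(X)^k$ surviving the gcd condition have degree $0$ or full degree (the latter being excluded since it exceeds $\Delta$), and then the degree hypothesis collapses to $\deg g>0$. The one genuine difference is which theorem you invoke: the paper applies Theorem~\ref{thm7}, whose $q_u$ is built from unitary divisors but whose hypotheses formally demand \emph{both} coprimality conditions, including $\gcd\bigl(f(X,0),\frac{\partial f}{\partial Y}(X,0)\bigr)=\gcd(a_0,a_1)=1$, which is not among the assumptions of Corollary~\ref{coro8}; you instead apply Theorem~\ref{thm5} with admissible divisors, using the coprimality at $g(X)$ only to identify $D_{ad}(f(X,g(X)))$ with $\{1,p(X)^k\}$ up to constants, and needing nothing about the divisors of $a_0$ beyond $\deg d_1\le\deg a_0$. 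This route sidesteps the superfluous condition at $Y=0$ entirely, exactly in the spirit of the paper's own remark after Corollary~\ref{corovechi} in the univariate setting (where Theorem~\ref{thm1} is preferred over Theorem~\ref{thm3} for the same reason), so if anything your choice of theorem fits the stated hypotheses of the corollary more cleanly; the bound on $q$ and the final reduction are otherwise identical to the paper's.
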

\begin{proof} \ Here we may apply Theorem \ref{thm7} with $a(X)=0$, and 
$b(X)$ any non-constant polynomial (denoted here by $g(X)$) such that 
$f(X,b(X))=h(X)^{k}$ with $k\geq 1$ and $h\in K[X]$, $h$ irreducible over 
$K$. Indeed, in this case $f(X,a(X))f(X,b(X))\neq 0$, and the fact that 
$\Delta \geq 0$ follows again by (\ref{diferenta}). It remains to prove 
that we must have $q_{u}=0$. Any unitary divisor $d_{1}$ of 
$f(X,a(X))=a_{0}$ is of degree at most $\deg a_0$, and any divisor 
$d_{2}$ of $h(X)^{k}$ which is relatively prime to $\frac{h(X)^{k}}{d_{2}}$ 
either has degree $0$ or has degree $k\deg h=n\deg b+\deg a_n$, so 
$\deg d_2-\deg d_1$ in the definition of $q_u$ is  equal either to 
$-\deg d_1$, which is at most $0$, or to $n\deg b+\deg a_n-\deg d_1$, 
which exceeds $\Delta $, according to (\ref{diferenta}). Therefore our 
condition $\deg b(X)>\max \{ \deg a(X),\max \limits_{0\leq i\leq n-1 }
\frac{\deg a_{i}-\deg a_{n}}{n-i} \} $+$q_{u}$ reduces here to $\deg b(X)>0$ too.
\end{proof}

\section{Examples} \label{se5}
{\bf 1)\ } For any fixed, arbitrarily chosen integers 
$a_{1},\dots ,a_{n-1}$ and $k\geq 0$, the polynomial
\[
f(X)=p^{k}+a_{1}X+\cdots +a_{n-1}X^{n-1}+(p^{k+1}-p^{k}-a_{1}-\dots -a_{n-1})X^{n}
\]
is irreducible over $\mathbb{Q}$ for all but finitely many prime numbers $p$. 
To prove this, we note that $f(0)=p^{k}$ and $f(1)=p^{k+1}$, so we may apply 
Corollary \ref{coro1main} i) with $a=0, \ b=1$, provided that 
\[
|p^{k+1}-p^{k}-a_{1}-\dots -a_{n-1}|>2^{n}p^{k}+\sum\limits_{i=1}^{n-1}2^{n-i}|a_{i}|,
\]
and this will obviously hold for sufficiently large prime numbers $p$.

To see an explicit example where a lower bound for $p$ can be easily derived,
one can take $a_{1}=\dots =a_{n-1}=1$ and $k\geq 1$ to conclude that      
the polynomial 
\[
f(X)=(p^{k+1}-p^{k}-n+1)X^{n}+X^{n-1}+\cdots +X+p^{k}
\]
is irreducible over $\mathbb{Q}$ for all primes $p\geq 2^{n}+3$.
Indeed, the condition
\[
|a_n|=p^{k+1}-p^{k}-n+1>2^{n}-2+2^{n}p^{k}=\sum\limits_{i=0}^{n-1}2^{n-i}|a_i| 
\]
will obviously hold for $p\geq 2^{n}+3$.
 
\smallskip 

{\bf 2)\ } For any integers $k\geq 1$, $n\geq 1$,
and any prime numbers 
$p$, $r$ with $p>r\geq 2^{n}+1$, the polynomial
$f(X)=\bigl(p^{k}(r+1)+n-1\bigr)X^{n}-X^{n-1}-\cdots -X-p^{k}$ is irreducible 
over $\mathbb{Q}$. Here we observe that $|f(0)|=p^{k}$, $f(1)=p^{k}r$, and 
\[
|a_n|=p^{k}(r+1)+n-1>2^{n}-2+2^{n}p^{k}=\sum\limits_{i=0}^{n-1}2^{n-i}|a_i| 
\]
for $p>r\geq 2^{n}+1$. The conclusion follows by Corollary \ref{coro1main} ii) 
with $a=0$ and $b=1$.

\smallskip 

{\bf 3)\ } For any integers $n\geq 1$ and $m>2^{n+1}-2$ such that 
$(m+1)\cdot 2^{n}-1$ is a prime number, the polynomial 
$f(X)=1+X+\cdots +X^{n-1}+mX^{n}$ is irreducible over $\mathbb{Q}$. To see 
this, we note that $f(2)=(m+1)\cdot 2^{n}-1$, which is a prime number, and 
since the inequality $m>2^{n+1}-2$ is precisely the condition 
$|a_n|>2|a_{n-1}|+2^2|a_{n-2}|+\cdots +2^n|a_0|$ applied to the coefficients 
of $f$, the conclusion follows by Corollary \ref{coro2}.

\smallskip 

{\bf 4)\ } Let $f(X)=254X^6-4X^5+X^4-X^3-X^2-3$. We observe that 
$f(2)=127^{2}$, which is a prime power, 
and $f'(2)=48464$, which is not divisible by $127$. Then, since 
$a_6$ satisfies $|a_6|=254>228=\sum\limits_{i=0}^{5}2^{6-i}|a_i|$, 
we see from Corollary \ref{corovechi} that $f$ too is irreducible 
over $\mathbb{Q}$.   

\smallskip 

{\bf 5)\ } Let $p$ be a prime number and let 
\[
f(X,Y)=p+(p-1)XY+(p^{2}X+p+1)Y^{2}+pXY^{3}+X^{2}Y^{4}.
\]
We observe that if we write $f$ as a polynomial in $Y$ with coefficients 
in $\mathbb{Z}[X]$ as $f=\sum\limits_{i=0}^{4}a_{i}(X)Y^{i}$ with 
$a_{i}(X)\in \mathbb{Z}[X]$, we have $\deg a_{4}>\max_{0\leq i\leq 3}
\{ \deg a_{i} \} $. On the other hand, we note that
\[
f(X,X)=p+2pX^{2}+p^{2}X^{3}+pX^{4}+X^{6}, 
\]
which is Eisensteinian with respect to the prime $p$, and hence 
irreducible over $\mathbb{Q}$, so one may apply Corollary \ref{coro6} 
with $g(X)=X$ to conclude that $f$ is irreducible over $\mathbb{Q}$.

\smallskip 

{\bf 6)\ } Let $p$ be a prime number and let 
\[
f(X,Y)=p^{2}+(2p^{3}+p^{4}X)Y+(2pX+2p^{2}X^{2})Y^{2}+X^{2}Y^{4}.
\]
If we write $f$ as $f=\sum\limits_{i=0}^{4}a_{i}(X)Y^{i}$ with 
$a_{i}(X)\in \mathbb{Z}[X]$, we have $\deg a_{4}\geq\max_{0\leq i\leq 3}
\{ \deg a_{i} \} $. We observe next that
$f(X,X)=(p+p^{2}X+X^{3})^{2}$,
with $p+p^{2}X+X^{3}$ irreducible over $\mathbb{Q}$, being Eisensteinian 
with respect to $p$. Since
\[
\frac{\partial f}{\partial Y}(X,X)=2p^{3}+p^{4}X+4pX^{2}+4p^{2}X^{3}+4X^{5},
\]
we have
\[
\frac{\partial f}{\partial Y}(X,X)=4X^{2}(p+p^{2}X+X^{3})+2p^{3}+p^{4}X,
\]
so $\frac{\partial f}{\partial Y}(X,X)$ is not divisible by $p+p^{2}X+X^{3}$, 
which shows that $f(X,X)$ and $\frac{\partial f}{\partial Y}(X,X)$ 
are
relatively prime. We may therefore apply 
Corollary \ref{coro8} with $g(X)=X$ to conclude that $f$ is irreducible 
over $\mathbb{Q}$.
\medskip

{\bf Acknowledgements} \ This work was partially done in the frame of the 
GDRI ECO-Math.  The authors are grateful to C.M. Bonciocat for useful discussions and suggestions that improved the presentation of the paper.

\end{document}